\documentclass[conference]{IEEEtran}
\IEEEoverridecommandlockouts

\usepackage{amsmath, amssymb, amsfonts}
\usepackage{algorithm, algorithmic}
\usepackage{graphicx}
\usepackage{textcomp}
\usepackage{xcolor}
\usepackage{flushend} 
\usepackage{threeparttable}
\allowdisplaybreaks[4]

\newtheorem{theorem}{Theorem}
\newtheorem{lemma}{Lemma}
\newtheorem{corollary}{Corollary}
\newtheorem{assumption}{Assumption}

\newtheorem{remark}{Remark}
\usepackage{booktabs} 
\usepackage{multirow} 
\usepackage{makecell} 
\usepackage{cite}
\def\BibTeX{{\rm B\kern-.05em{\sc i\kern-.025em b}\kern-.08em
    T\kern-.1667em\lower.7ex\hbox{E}\kern-.125emX}}
\begin{document}

\title{Improved Convergence for Decentralized Stochastic Optimization with Biased Gradients\\
\thanks{This work was supported in part by the National Natural Science Foundation of China 
under Grant 62503344 and in part by the Postdoctoral Fellowship Program of CPSF under Grant GZC20241120. (Corresponding author: Yiwei Liao.)}
}

\author{
    \IEEEauthorblockN{Qing Xu, Yiwei Liao, Wenqi Fan, Xingxing You, and  Songyi Dian}
    \IEEEauthorblockA{School of Electrical Engineering, Sichuan University, Chengdu, China \\
    xuqing382015@163.com, liaoyiwei@scu.edu.cn, fanjessie159@gmail.com, youxingxing@scu.edu.cn, scudiansy@scu.edu.cn}
}

\maketitle

\begin{abstract}
Decentralized stochastic optimization has emerged as a fundamental paradigm for large-scale machine learning. However, practical implementations often rely on biased gradient estimators arising from communication compression or inexact local oracles, which severely degrade convergence in the presence of data heterogeneity. To address the challenge, we propose Decentralized Momentum Tracking with Biased Gradients (Biased-DMT), a novel decentralized algorithm designed to operate reliably under biased gradient information. We establish a comprehensive convergence theory for Biased-DMT in nonconvex settings and show that it achieves linear speedup with respect to the number of agents. The theoretical analysis shows that Biased-DMT decouples the effects of network topology from data heterogeneity, enabling robust performance even in sparse communication networks. Notably, when the gradient oracle introduces only absolute bias, the proposed method eliminates the structural heterogeneity error and converges to the exact physical error floor. For the case of relative bias, we further characterize the convergence limit and show that the remaining error is an unavoidable physical consequence of locally injected noise. Extensive numerical experiments corroborate our theoretical analysis and demonstrate the practical effectiveness of Biased-DMT across a range of decentralized learning scenarios.

\end{abstract}

\begin{IEEEkeywords}
Decentralized optimization, Biased gradients, Momentum tracking, Data heterogeneity, Nonconvex function.
\end{IEEEkeywords}

\section{Introduction}
Decentralized stochastic optimization has emerged as a fundamental paradigm for solving large-scale machine learning and control problems over multi-agent networks. We consider a system of $n$ agents interacting over a connected network, aiming to collaboratively solve the following  optimization problem
\begin{equation}\label{problem}
    \min_{\mathbf{x} \in \mathbb{R}^d} F(\mathbf{x}) := \frac{1}{n} \sum_{i=1}^n f_i(\mathbf{x}),
\end{equation}
where $f_i: \mathbb{R}^d \to \mathbb{R}$ is a smooth, potentially nonconvex local objective function accessible only to agent $i$. Unlike centralized approaches (e.g., Parameter Server) where a central node aggregates information from all workers, decentralized algorithms allow agents to communicate only with their immediate neighbors via a peer-to-peer protocol \cite{c1}, \cite{c2}. This architecture effectively avoids the communication bottleneck at the central server and enhances data privacy, making it particularly suitable for applications ranging from distributed sensing to federated learning \cite{c3}.

The cornerstone of decentralized stochastic optimization is Decentralized Stochastic Gradient Descent (DSGD) \cite{c3}, \cite{c4}. While DSGD mimics the behavior of centralized SGD, its performance degrades significantly when the local data distributions are non-IID (heterogeneous). In such cases, the variance among local gradients introduces a nonvanishing steady-state error. To mitigate the issue, Gradient Tracking (GT) methods \cite{c5}, \cite{c6} were introduced to estimate the global average gradient, thereby ensuring exact convergence even with heterogeneous data. Furthermore, to accelerate convergence, momentum-based techniques have been integrated into decentralized schemes. However, standard decentralized momentum SGD (DSGDm) \cite{c8} still suffers from data heterogeneity. To address the challenge, recent advances have introduced momentum tracking algorithms \cite{c9}, \cite{c12}. By coordinating momentum updates across agents, Momentum Tracking combines the acceleration benefits of momentum with the robustness of tracking mechanisms, achieving optimal convergence rates that are independent of data heterogeneity.

Despite these significant advances, the aforementioned algorithms typically operate under a strong assumption: agents have access to \textit{unbiased} stochastic gradients of their local functions. In many practical scenarios, however, this assumption is violated. Agents often interact with \textit{biased} gradient oracles due to system constraints or privacy requirements. {For instance, in communication-constrained networks, agents may apply biased compression operators to gradients to reduce bandwidth usage \cite{c13, c14, c15, c16}. Similarly, in zeroth-order optimization, gradient estimates inherently carry systematic bias \cite{c17}.} A recent study by Jiang et al. \cite{c18} analyzed DSGD with biased gradients, revealing that the systematic bias introduces an irreducible error floor.  While Jiang et al. \cite{c18} rigorously analyzed the impact of bias, the  convergence rate is still heavily impacted by the data heterogeneity term. Conversely, existing Momentum Tracking methods \cite{c9} are robust to heterogeneity but lack theoretical guaranties when the gradient inputs are systematically biased.

In this paper, we bridge the critical gap by proposing decentralized momentum tracking with biased gradients, termed Biased-DMT. Then, we establish a rigorous theoretical foundation for its convergence and empirically validate its robust performance against both data heterogeneity and systematic bias. The main contributions of this work are summarized as follows.
\begin{itemize}
    \item \textbf{Algorithm Design and Unified Bound.} We propose Biased-DMT, a novel algorithm that structurally integrates biased gradient estimators into a momentum tracking framework. We establish a rigorous nonconvex convergence bound that elegantly accommodates both relative bias ($M_f$) and absolute bias ($\sigma_f^2$), upgrading the theoretical limits of optimization under imperfect oracles.
    
    \item \textbf{Linear Speedup and Topology Decoupling.} With an appropriately tuned momentum parameter, Biased-DMT absorbs the transient network-induced penalty and achieves the optimal $\mathcal{O}(1/\sqrt{nT})$ linear speedup. The resulting convergence bound explicitly decouples the network spectral gap $\rho$ from the data heterogeneity variance $\zeta^2$. Under absolute bias ($M_f = 0$), the structural heterogeneity error is  eliminated, enabling convergence to the exact physical error floor without requiring the commonly assumed bounded heterogeneity condition.
    
    \item \textbf{Empirical Validation of Theoretical Findings.} Extensive numerical experiments systematically validate our theoretical framework. The results confirm the algorithm's superiority in highly heterogeneous environments and explicitly verify the theoretically predicted stair-step error floors and steady-state dynamics under biased oracles.
\end{itemize}

\section{Related Work}

\subsection{Decentralized Optimization and Momentum Tracking}
Decentralized optimization has been extensively studied in recent years. The most fundamental algorithm, Decentralized SGD (D-PSGD) \cite{c3}, enables agents to optimize a global objective by averaging models with neighbors. However, D-PSGD suffers from a nonvanishing steady-state error caused by \textit{data heterogeneity} (the variance of local gradients, denoted as $\zeta^2$). It means that  D-PSGD cannot converge to the exact stationary point for constant step sizes, if the data is non-IID.

To address the limitation, Gradient Correction and Gradient Tracking methods, such as $D^2$ \cite{c4}, GT-DSGD [6] and GNSD [7], were proposed. These algorithms introduce correction mechanisms or auxiliary variables to track the global average gradient, successfully eliminating the heterogeneity-induced bias and ensuring exact convergence. However, standard GT and correction methods typically do not incorporate momentum acceleration, which limits their practical convergence speed, especially in nonconvex deep learning tasks.

Integrating momentum into decentralized schemes is a standard practice for acceleration. Decentralized Momentum SGD (DSGDm) \cite{c8} achieves linear speedup, but   it is not robust to data heterogeneity like D-PSGD. It motivated the development of \textit{Momentum Tracking} algorithms (MT), such as STEM \cite{c10} and the work of Takezawa et al. \cite{c9}. These methods align the momentum updates across agents, achieving both acceleration and robustness (i.e., removing $\zeta^2$ from the error floor). Nevertheless, these works strictly assume access to \textit{unbiased} stochastic gradients, which restricts their applicability in  communication, privacy protection and other learning-based scenarios.

\subsection{Optimization with Biased Gradients}
In many realistic distributed systems, agents usually suffer from \textit{biased} gradient estimators. Common sources of bias include gradient compression used to reduce communication overhead \cite{c13, c14, c15, c16}, or zeroth-order gradient estimation \cite{c17}. Theoretical analyses of SGD with biased gradients \cite{c19} showed that the systematic bias (denoted as $\sigma_f^2$) appears explicitly in the convergence bound.

Most relevant to our work is the recent study by Jiang et al. \cite{c18}, which provided a comprehensive analysis of \textit{Biased-DSGD}. It showed that the algorithm converges to an error floor determined by the bias variance. However, its framework is built upon the standard D-PSGD architecture and the asymptotic error bound still contains the data heterogeneity term $\zeta^2$. It implies that in highly heterogeneous environments, the performance of Biased-DSGD may degrade significantly.

\subsection{Summary of Theoretical Comparisons}
Table \ref{tab:comparison} highlights the theoretical advantages of Biased-DMT against representative baselines:

\textbf{1) Exact Linear Speedup and Noise Absorption.} Biased-DMT completely absorbs the network topology penalty and the pure stochastic noise variance ($\sigma^2$), achieving the exact $\mathcal{O}(1/\sqrt{nT})$ linear speedup. 

\textbf{2) Topology-Heterogeneity Decoupling.} Existing methods \cite{c3, c8, c18} severely amplify data heterogeneity $\zeta^2$ by the spectral gap $\rho$. In contrast, Biased-DMT strictly decouples $\rho$ from $\zeta^2$, leaving an irreducible physical error floor of only $\mathcal{O}(M_f \zeta^2 + \sigma_f^2)$.

\textbf{3) Heterogeneity Independence.} When the oracle exhibits only absolute bias ($M_f = 0$), the residual heterogeneity term perfectly vanishes. Biased-DMT attains the exact $\mathcal{O}(\sigma_f^2)$ error floor without requiring the commonly used bounded data heterogeneity assumption.

\begin{table}[htbp]
    \centering
    \caption{Comparison of convergence bounds.}
    \label{tab:comparison}
    \renewcommand{\arraystretch}{1.5} 
    \setlength{\tabcolsep}{3pt} 
    \resizebox{\columnwidth}{!}{
    \begin{tabular}{l c c l}
        \toprule[1.2pt]
        \textbf{Algorithm} & \textbf{Mom.} & \textbf{Biased} & \textbf{Convergence Bound} \\
        \midrule[0.6pt]
        D-PSGD \cite{c3} & $\times$ & $\times$ & $\mathcal{O}\left( \frac{1}{\sqrt{nT}} \right) + \mathcal{O}\left( \frac{\zeta^2}{\rho} \right)$ \\
        GT-DSGD \cite{c6} & $\times$ & $\times$ & $\mathcal{O}\left( \frac{1}{\sqrt{nT}} \right)$ \\
        DSGDm \cite{c8} & \checkmark & $\times$ & $\mathcal{O}\left( \frac{1}{\sqrt{nT}} \right) + \mathcal{O}\left( \frac{\zeta^2}{\rho} \right)$ \\
        Biased-DSGD \cite{c18} & $\times$ & \checkmark & $\mathcal{O}\left( \frac{1}{\sqrt{nT}} + \frac{n}{T} \right) + \mathcal{O}\left( \frac{\zeta^2}{\rho^2} +M_f \zeta^2 +\sigma_f^2 \right)$ \\
        \midrule[0.6pt]
        \textbf{Biased-DMT} (Relative Bias) & \textbf{\checkmark} & \textbf{\checkmark} & $\boldsymbol{\mathcal{O}\left( \frac{1}{\sqrt{nT}} \right) + \mathcal{O}\left( M_f \zeta^2 + \sigma_f^2 \right)}$ \\
        \bottomrule[1.2pt]
    \end{tabular}
    }
    \vspace{0.2cm} 
    \\
    \raggedright
    {\scriptsize 
    \begin{tablenotes}
    \item \textbf{Note:} Constants omitted. $\rho$: spectral gap; $\zeta^2$: data heterogeneity; $\sigma_f^2, M_f$: absolute and relative bias.
    \end{tablenotes}}
\end{table}

\section{Preliminaries}

In this section, we formally present the notations and detail the proposed Biased-DMT algorithm. Subsequently, we explicitly state the standard assumptions required for our theoretical analysis.

\subsection{Notations}
We use boldface lowercase letters (e.g., $\mathbf{x}$) for vectors and boldface uppercase letters (e.g., $\mathbf{X}$) for matrices. $\|\cdot\|_F$ denotes the Frobenius norm. $\mathbf{1}_n$ is the column vector of $n$ ones, and $\mathbf{I}_n$ is the identity matrix. $\mathbf{X}^t = [\mathbf{x}_1^t, \dots, \mathbf{x}_n^t] \in \mathbb{R}^{d \times n}$ denotes the model parameters of all $n$ agents at iteration $t$, and $\bar{\mathbf{x}}^t = \frac{1}{n} \mathbf{X}^t \mathbf{1}_n \in \mathbb{R}^d$ is their average parameter vector. To facilitate the matrix-form analysis, we define the corresponding average matrix as $\bar{\mathbf{X}}^t := \bar{\mathbf{x}}^t \mathbf{1}_n^\top \in \mathbb{R}^{d \times n}$. Equivalently, it can be compactly written as $\bar{\mathbf{X}}^t = \mathbf{X}^t\mathbf{J}$, where $\mathbf{J} = \frac{1}{n}\mathbf{1}_n\mathbf{1}_n^\top$ is the averaging matrix. The matrix notation naturally extends to other variables such as $\bar{\mathbf{V}}^t$ and $\bar{\mathbf{M}}^t$. $\nabla \mathbf{F}(\mathbf{X}^t) = [\nabla f_1(\mathbf{x}_1^t), \dots, \nabla f_n(\mathbf{x}_n^t)]$ represents the matrix of true local gradients. We use $\tilde{g}_i(\cdot)$ to denote the \textit{biased} stochastic gradient estimator accessible to agent $i$, which approximates the true gradient $\nabla f_i(\cdot)$.

\subsection{Proposed Algorithm: Biased-DMT}
We formally introduce \textbf{Biased-DMT} to solve problem (1), with the detailed procedure described in Algorithm 1. Each agent $i$ maintains three local variables: the model parameter $x_i$, the momentum estimator $m_i$, and the tracking variable $v_i$. 

In addition, the update logic differs from traditional SGD in order to guarantee theoretical consistency: agents query the biased stochastic gradient $\tilde{g}_i$ at the \textit{updated} intermediate model location $x_i^{t+1}$ instead of $x_i^t$.  The subsequent momentum and tracker updates then fuse the new gradient information with the network consensus direction.

\begin{algorithm}[htbp]
    \caption{Biased-DMT}
    \label{alg:biased_dmt}
    \begin{algorithmic}[1]
        \STATE \textbf{Input:} Initial point $\mathbf{x}_i^0 = \bar{\mathbf{x}}^0$, step size $\eta > 0$, momentum coefficient $\lambda \in (0,1]$.
        \STATE \textbf{Initialize:} $\mathbf{m}_i^0 = \tilde{g}_i(\mathbf{x}_i^0)$, $\mathbf{v}_i^0 = \mathbf{m}_i^0$ for all agents $i \in \{1, \dots, n\}$.
        \FOR{$t = 0, 1, \dots, T-1$}
            \FOR{each agent $i \in \{1, \dots, n\}$ in parallel}
                \STATE $\triangleright$ \textit{Step 1: Consensus \& Model Update}
                \STATE Receive $\mathbf{x}_j^t$ from neighbors.
                \STATE $\mathbf{x}_i^{t+1} = \sum_{j=1}^n w_{ij} \mathbf{x}_j^t - \eta \mathbf{v}_i^t$
                
                \STATE $\triangleright$ \textit{Step 2: Biased Gradient Query}
                \STATE Sample biased gradient $\tilde{g}_i(\mathbf{x}_i^{t+1})$.
                
                \STATE $\triangleright$ \textit{Step 3: Momentum \& Tracking Update}
                \STATE $\mathbf{m}_i^{t+1} = (1-\lambda)\mathbf{m}_i^t + \lambda \tilde{g}_i(\mathbf{x}_i^{t+1})$
                \STATE Receive $\mathbf{v}_j^t$ from neighbors.
                \STATE $\mathbf{v}_i^{t+1} = \sum_{j=1}^n w_{ij} \mathbf{v}_j^t + \mathbf{m}_i^{t+1} - \mathbf{m}_i^t$
            \ENDFOR
        \ENDFOR
    \end{algorithmic}
\end{algorithm}

\textbf{Matrix Form Representation.}
To facilitate the theoretical analysis, we rewrite the item-wise updates of Algorithm \ref{alg:biased_dmt} into a compact matrix form. Let $\tilde{\mathbf{G}}(\mathbf{X}^{t+1}) := [\tilde{g}_1(\mathbf{x}_1^{t+1}), \dots, \tilde{g}_n(\mathbf{x}_n^{t+1})]$ denote the matrix of biased stochastic gradients. The system dynamics evolve as follows
\begin{subequations} \label{eq:matrix_form}
\begin{align}
    \mathbf{X}^{t+1} &= \mathbf{X}^t \mathbf{W} - \eta \mathbf{V}^t, \label{eq:update_X} \\
    \mathbf{M}^{t+1} &= (1-\lambda)\mathbf{M}^t + \lambda \tilde{\mathbf{G}}(\mathbf{X}^{t+1}), \label{eq:update_M} \\
    \mathbf{V}^{t+1} &= \mathbf{V}^t \mathbf{W} + \mathbf{M}^{t+1} - \mathbf{M}^t. \label{eq:update_V}
\end{align}
\end{subequations}
Here, (\ref{eq:update_X}) represents the consensus step combined with the descent direction. Equation (\ref{eq:update_M}) is the local momentum update, and (\ref{eq:update_V}) ensures that the tracker $\mathbf{V}^t$ aligns with the average momentum direction.

\subsection{Assumptions}
The convergence analysis relies on the following standard assumptions regarding the objective function, network topology, consistent with \cite{c3}, \cite{c9}, \cite{c18}, and the biased oracle.

\begin{assumption}[Smoothness and Lower Bound] \label{ass:smoothness}
    Each local objective function $f_i(\mathbf{x})$ is $L$-smooth, meaning there exists a constant $L > 0$ such that for all $\mathbf{x}, \mathbf{y} \in \mathbb{R}^d$,
    \begin{equation}
        \|\nabla f_i(\mathbf{x}) - \nabla f_i(\mathbf{y})\| \le L \|\mathbf{x} - \mathbf{y}\|.
    \end{equation}
    Furthermore, the global objective function $F(\mathbf{x})$ is bounded from below, i.e., $F^* := \inf_{\mathbf{x}} F(\mathbf{x}) > -\infty$.
\end{assumption}

\begin{assumption}[Network Topology] \label{ass:network}
    The agents communicate over a connected graph endowed with a mixing matrix $\mathbf{W} = [w_{ij}] \in \mathbb{R}^{n \times n}$. The matrix $\mathbf{W}$ satisfies
    \begin{enumerate}
        \item \textbf{Doubly Stochastic.} The mixing matrix $\mathbf{W}$ is doubly stochastic, satisfying $\mathbf{W} \mathbf{1}_n = \mathbf{1}_n$ and $\mathbf{1}_n^\top \mathbf{W} = \mathbf{1}_n^\top$.
        \item \textbf{Spectral Gap.} The eigenvalues of $\mathbf{W}$ are real and sorted as $1 = \lambda_1 > |\lambda_2| \ge \dots \ge |\lambda_n|$. We define the spectral gap as $\rho := 1 - |\lambda_2| \in (0, 1]$.
    \end{enumerate}
\end{assumption}

\begin{remark}
    Assumption \ref{ass:network} is fundamental for consensus algorithms. It implies the contraction property $\|\mathbf{X}\mathbf{W} - \bar{\mathbf{x}}\mathbf{1}_n^\top\|_F \le (1-\rho)\|\mathbf{X} - \bar{\mathbf{x}}\mathbf{1}_n^\top\|_F$, which ensures that agents' local models converge to the global average.
\end{remark}

Unlike traditional settings that assume unbiased gradient estimators, we consider a more general scenario where the gradient oracle is biased.

\begin{assumption}[Biased Gradient Oracle] \label{ass:oracle}
    At each iteration $t$, each agent $i$ has access to a stochastic gradient oracle $\tilde{g}_i(\mathbf{x})$ satisfying
    \begin{enumerate}
        \item \textbf{Bounded Variance.} The variance of the stochastic gradient is bounded by $\sigma^2 \ge 0$,
        \begin{equation}
            \mathbb{E}[\|\tilde{g}_i(\mathbf{x}) - \mathbb{E}[\tilde{g}_i(\mathbf{x})]\|^2 | \mathbf{x}] \le \sigma^2.
        \end{equation}
        \item \textbf{Bounded Bias.} The systematic bias is bounded by a relative term $M_f \ge 0$ and a constant $\sigma_f^2 \ge 0$,
        \begin{equation} \label{eq:bias_assumption}
            \|\mathbb{E}[\tilde{g}_i(\mathbf{x}) | \mathbf{x}] - \nabla f_i(\mathbf{x})\|^2 \le M_f \|\nabla f_i(\mathbf{x})\|^2 + \sigma_f^2.
        \end{equation}
    \end{enumerate}
\end{assumption}

\begin{remark}
    Assumption \ref{ass:oracle}  generalizes the standard unbiased setting (where $M_f=0, \sigma_f=0$). The term $M_f \|\nabla f_i(\mathbf{x})\|^2$ captures the relative bias proportional to the gradient norm, while $\sigma_f^2$ captures the absolute bias. The decomposition aligns with our theoretical analysis in Section IV.
\end{remark}


\section{Convergence Analysis}

In this section, we provide the theoretical analysis of the proposed algorithm. The analysis proceeds in three steps: first, we bound the errors related to the consensus and momentum tracking mechanisms; second, we analyze the descent property of the objective function; and finally, we derive the global convergence rate.

\subsection{Auxiliary Lemmas}

We begin by bounding the expected change in the momentum matrix, which is crucial for analyzing the tracking error.

\begin{lemma}[Bound on Momentum Change] \label{lemma:mom_change}
Suppose Assumptions 1 and 3 hold. The expected change in the momentum matrix is bounded by
\begin{align}
    &\mathbb{E}[\|\mathbf{M}^{t+1} - \mathbf{M}^t\|_F^2]\nonumber \\
    \le& 3\lambda^2 \mathcal{G}^t 
    + 3\lambda^2 L^2 (1 + 2M_f) \mathbb{E}[\|\mathbf{X}^{t+1} - \mathbf{X}^t\|_F^2] \nonumber \\
    & + 3\lambda^2 \big( n(\sigma^2 + \sigma_f^2) + 2M_f \mathbb{E}[\|\nabla \mathbf{F}(\mathbf{X}^t)\|_F^2] \big).
\end{align}
\end{lemma}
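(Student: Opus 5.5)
From the momentum update \eqref{eq:update_M}, $\mathbf{M}^{t+1}-\mathbf{M}^t = \lambda(\tilde{\mathbf{G}}(\mathbf{X}^{t+1}) - \mathbf{M}^t)$, so
\[
\|\mathbf{M}^{t+1}-\mathbf{M}^t\|_F^2=\lambda^2\|\tilde{\mathbf{G}}(\mathbf{X}^{t+1})-\mathbf{M}^t\|_F^2 .
\]
The $\lambda^2$ factor is then automatic, and the task reduces to bounding $\mathbb{E}\|\tilde{\mathbf{G}}(\mathbf{X}^{t+1})-\mathbf{M}^t\|_F^2$. The statement leaves $\mathcal{G}^t$ undefined; I read it as the momentum tracking error against the true gradients at the current iterate,
\[
\mathcal{G}^t:=\mathbb{E}\|\mathbf{M}^t-\nabla\mathbf{F}(\mathbf{X}^t)\|_F^2 .
\]
I will prove the bound with that definition.

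Write $\bar{\mathbf{G}}(\mathbf{X}^{t+1})$ for the conditional mean of the oracle. The plan is a three-way split of $\tilde{\mathbf{G}}(\mathbf{X}^{t+1})-\mathbf{M}^t$:
\begin{align*}
\text{(a)}\;& \tilde{\mathbf{G}}(\mathbf{X}^{t+1})-\nabla\mathbf{F}(\mathbf{X}^{t+1}) \quad\text{(bias plus noise at } \mathbf{X}^{t+1}\text{)},\\
\text{(b)}\;& \nabla\mathbf{F}(\mathbf{X}^{t+1})-\nabla\mathbf{F}(\mathbf{X}^t) \quad\text{(gradient drift)},\\
\text{(c)}\;& \nabla\mathbf{F}(\mathbf{X}^t)-\mathbf{M}^t \quad\text{(tracking error)}.
\end{align*}
Young's inequality $\|a+b+c\|^2\le 3(\|a\|^2+\|b\|^2+\|c\|^2)$ produces the common factor $3\lambda^2$. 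Term (c) gives $3\lambda^2\mathcal{G}^t$. Term (b) gives $L^2\|\mathbf{X}^{t+1}-\mathbf{X}^t\|_F^2$ by Assumption \ref{ass:smoothness}.

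\textbf{Term (a).} Condition on $\mathbf{X}^{t+1}$ and split into variance and squared bias; the cross term vanishes in conditional expectation. This gives
\[
\mathbb{E}\|(\text{a})\|_F^2\le n\sigma^2+\sum_i\bigl(M_f\|\nabla f_i(\mathbf{x}_i^{t+1})\|^2+\sigma_f^2\bigr)
\]
by Assumption \ref{ass:oracle}. This is $n(\sigma^2+\sigma_f^2)+M_f\|\nabla\mathbf{F}(\mathbf{X}^{t+1})\|_F^2$.

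\textbf{Moving the gradient back to time $t$.} Use $\|\nabla\mathbf{F}(\mathbf{X}^{t+1})\|_F^2\le 2\|\nabla\mathbf{F}(\mathbf{X}^t)\|_F^2+2L^2\|\mathbf{X}^{t+1}-\mathbf{X}^t\|_F^2$. The relative-bias term then contributes $2M_f\|\nabla\mathbf{F}(\mathbf{X}^t)\|_F^2$ and an extra $2M_fL^2\|\mathbf{X}^{t+1}-\mathbf{X}^t\|_F^2$. Combined with term (b), the coefficient of $\|\mathbf{X}^{t+1}-\mathbf{X}^t\|_F^2$ is exactly $L^2(1+2M_f)$. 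Multiplying everything by $3\lambda^2$ and taking total expectation gives the stated inequality.

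\textbf{Main obstacle.} The delicate step is the conditional variance–bias split in (a). $\tilde{\mathbf{G}}$ is sampled at $\mathbf{X}^{t+1}$, which depends on $\mathbf{V}^t$ and earlier randomness. I need the oracle noise at step $t+1$ to be conditionally independent given the history up to $\mathbf{X}^{t+1}$; that is exactly how Assumption \ref{ass:oracle} is phrased (conditioning on $\mathbf{x}$), so the tower property makes the cross term vanish. The remaining steps are routine applications of Young's inequality and smoothness.
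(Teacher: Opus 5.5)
Your proposal is correct and follows the paper's proof step for step. The paper uses the same three-way split of $\tilde{\mathbf{G}}(\mathbf{X}^{t+1})-\mathbf{M}^t$ into oracle error, gradient drift, and momentum estimation error with the factor-$3$ inequality, and the same conditional variance--bias decomposition of the oracle term. It also uses the same bound $\|\nabla\mathbf{F}(\mathbf{X}^{t+1})\|_F^2\le 2\|\nabla\mathbf{F}(\mathbf{X}^t)\|_F^2+2L^2\|\mathbf{X}^{t+1}-\mathbf{X}^t\|_F^2$ to obtain the coefficient $L^2(1+2M_f)$, and your reading $\mathcal{G}^t=\mathbb{E}\|\mathbf{M}^t-\nabla\mathbf{F}(\mathbf{X}^t)\|_F^2$ matches the paper's own definition, which appears later in Section IV.
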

\noindent \textit{Proof:} See Appendix A.

Based on the momentum change bound, we can characterize the contraction properties of the tracking error $\Xi_v^t := \mathbb{E}[\|\mathbf{V}^t - \bar{\mathbf{V}}^t\|_F^2]$ and the consensus error $\Xi_x^t := \mathbb{E}[\|\mathbf{X}^t - \bar{\mathbf{X}}^t\|_F^2]$.

\begin{lemma}[Contraction of Tracking Error] \label{lemma:tracking_error}
Under Assumption 2,  the tracking error satisfies
\begin{equation}
    \Xi_v^{t+1} \le \left(1 - \rho\right) \Xi_v^t + \frac{1}{\rho} \mathbb{E}[\|\mathbf{M}^{t+1} - \mathbf{M}^t\|_F^2].
\end{equation}
\end{lemma}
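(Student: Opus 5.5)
We project the tracker update (\ref{eq:update_V}) onto the disagreement subspace by right-multiplying with $\mathbf{I}_n - \mathbf{J}$, and then apply a Young-type splitting. Since $\mathbf{W}$ is doubly stochastic, $\mathbf{W}\mathbf{J} = \mathbf{J}\mathbf{W} = \mathbf{J}$. Hence $\bar{\mathbf{V}}^{t+1} = \mathbf{V}^{t+1}\mathbf{J} = \mathbf{V}^t\mathbf{J} + (\mathbf{M}^{t+1}-\mathbf{M}^t)\mathbf{J}$, and subtracting this from (\ref{eq:update_V}) gives
\begin{equation*}
\mathbf{V}^{t+1} - \bar{\mathbf{V}}^{t+1} = (\mathbf{V}^t - \bar{\mathbf{V}}^t)\mathbf{W} + (\mathbf{M}^{t+1}-\mathbf{M}^t)(\mathbf{I}_n - \mathbf{J}),
\end{equation*}
where we used $\mathbf{V}^t\mathbf{W} - \mathbf{V}^t\mathbf{J} = (\mathbf{V}^t - \mathbf{V}^t\mathbf{J})\mathbf{W}$.

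For the first term, the contraction property in the Remark after Assumption \ref{ass:network} gives $\|(\mathbf{V}^t - \bar{\mathbf{V}}^t)\mathbf{W}\|_F \le (1-\rho)\|\mathbf{V}^t - \bar{\mathbf{V}}^t\|_F$. This holds because $\mathbf{V}^t - \bar{\mathbf{V}}^t$ has rows orthogonal to $\mathbf{1}_n$, and on that subspace the operator norm of $\mathbf{W}$ is at most $|\lambda_2| = 1-\rho$; I take this as given, as the paper's Remark does. For the second term, $\mathbf{I}_n - \mathbf{J}$ is an orthogonal projector, so $\|(\mathbf{M}^{t+1}-\mathbf{M}^t)(\mathbf{I}_n-\mathbf{J})\|_F \le \|\mathbf{M}^{t+1}-\mathbf{M}^t\|_F$.

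Next apply $\|a+b\|^2 \le (1+\alpha)\|a\|^2 + (1+1/\alpha)\|b\|^2$ with $\alpha = \rho/(1-\rho)$. Then $(1+\alpha)(1-\rho)^2 = 1-\rho$ and $1 + 1/\alpha = 1/\rho$, so
\begin{equation*}
\|\mathbf{V}^{t+1} - \bar{\mathbf{V}}^{t+1}\|_F^2 \le (1-\rho)\|\mathbf{V}^t - \bar{\mathbf{V}}^t\|_F^2 + \tfrac{1}{\rho}\|\mathbf{M}^{t+1}-\mathbf{M}^t\|_F^2 .
\end{equation*}
If $\rho = 1$, then $\mathbf{W}$ annihilates the disagreement component, the first term vanishes, and the bound holds with coefficient $1 = 1/\rho$. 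Taking expectations gives the claim.

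The only delicate point is getting the coefficients exactly $1-\rho$ and $1/\rho$. This requires applying the Young parameter to the squared contraction $(1-\rho)^2$, not to $(1-\rho)$, and handling the edge case $\rho = 1$ separately; everything else is routine algebra.
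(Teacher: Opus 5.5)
Your proposal is correct and follows essentially the same route as the paper: project the tracker update onto the disagreement subspace, contract the mixing term by $1-\rho$, bound the $(\mathbf{I}_n-\mathbf{J})$-projected momentum increment by $\|\mathbf{M}^{t+1}-\mathbf{M}^t\|_F$, apply Young's inequality with parameter $\rho/(1-\rho)$, and treat $\rho=1$ separately. The only cosmetic difference is in the first term. You write it as $(\mathbf{V}^t-\bar{\mathbf{V}}^t)\mathbf{W}$ and cite the Remark's contraction, while the paper writes $(\mathbf{V}^t-\bar{\mathbf{V}}^t)(\mathbf{W}-\mathbf{J})$ and asserts $\|\mathbf{W}-\mathbf{J}\|_2=1-\rho$; both rely on the same unproved norm bound, which holds for example when $\mathbf{W}$ is symmetric but not in general under Assumption 2 as stated.
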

\noindent \textit{Proof:} See Appendix B.

\begin{lemma}[Contraction of Consensus Error] \label{lemma:consensus_error}
Under Assumption 2,  the consensus error satisfies
\begin{equation}
    \Xi_x^{t+1} \le \left(1 - \rho\right) \Xi_x^t + \frac{\eta^2}{\rho} \Xi_v^t.
\end{equation}
\end{lemma}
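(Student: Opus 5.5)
The plan is to subtract the averaged dynamics from the model update \eqref{eq:update_X} and then apply Young's inequality with a parameter tuned to the spectral gap.

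\textbf{Averaged dynamics.} Right-multiplying \eqref{eq:update_X} by $\mathbf{J}$ and using the double stochasticity of $\mathbf{W}$ (so that $\mathbf{W}\mathbf{J} = \mathbf{J}$) gives $\bar{\mathbf{X}}^{t+1} = \bar{\mathbf{X}}^t - \eta \bar{\mathbf{V}}^t$. Subtracting this from \eqref{eq:update_X} yields
\begin{equation*}
\mathbf{X}^{t+1} - \bar{\mathbf{X}}^{t+1} = (\mathbf{X}^t\mathbf{W} - \bar{\mathbf{X}}^t) - \eta(\mathbf{V}^t - \bar{\mathbf{V}}^t).
\end{equation*}
Since $\bar{\mathbf{X}}^t = \bar{\mathbf{X}}^t\mathbf{W}$, the first term can be written as $(\mathbf{X}^t - \bar{\mathbf{X}}^t)\mathbf{W}$. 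It lies in the subspace orthogonal to $\mathbf{1}_n$ on which $\mathbf{W}$ contracts, so the Remark after Assumption \ref{ass:network} gives $\|\mathbf{X}^t\mathbf{W} - \bar{\mathbf{X}}^t\|_F \le (1-\rho)\|\mathbf{X}^t - \bar{\mathbf{X}}^t\|_F$.

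\textbf{Young's inequality.} Apply $\|a+b\|^2 \le (1+\alpha)\|a\|^2 + (1+1/\alpha)\|b\|^2$ with $\alpha = \rho/(1-\rho)$, so that $1+\alpha = 1/(1-\rho)$ and $1 + 1/\alpha = 1/\rho$. The contraction factor $(1-\rho)^2$ on the consensus term then becomes $(1-\rho)^2/(1-\rho) = 1-\rho$. The tracking term acquires the coefficient $\eta^2/\rho$. Taking expectations gives exactly the stated inequality. The degenerate case $\rho = 1$ is handled separately: there $\mathbf{W} = \mathbf{J}$, the first term vanishes, and the bound holds directly because $\eta^2 \le \eta^2/\rho$.

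\textbf{Main obstacle.} No real difficulty arises. The only point needing care is the contraction step: it requires that $\mathbf{X}^t - \bar{\mathbf{X}}^t$ has rows orthogonal to $\mathbf{1}_n$, so that $\|\mathbf{W} - \mathbf{J}\|_2 \le 1-\rho$ applies. This relies on the symmetric, real-spectrum structure implicit in Assumption \ref{ass:network}, which is exactly what the Remark after that assumption records, so I would simply invoke it there.
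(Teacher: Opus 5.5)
Your proposal is correct and follows essentially the same route as the paper: averaged dynamics via right-multiplication by $\mathbf{J}$, the contraction $\|(\mathbf{X}^t-\bar{\mathbf{X}}^t)(\mathbf{W}-\mathbf{J})\|_F \le (1-\rho)\|\mathbf{X}^t-\bar{\mathbf{X}}^t\|_F$, then Young's inequality with parameter $\rho/(1-\rho)$. Writing the first term as $(\mathbf{X}^t-\bar{\mathbf{X}}^t)\mathbf{W}$ instead of $(\mathbf{X}^t-\bar{\mathbf{X}}^t)(\mathbf{W}-\mathbf{J})$ is only cosmetic, and you are right that both arguments depend on the same implicit bound $\|\mathbf{W}-\mathbf{J}\|_2 \le 1-\rho$ (symmetry of $\mathbf{W}$); your explicit treatment of $\rho=1$ is a small addition the paper makes only in Lemma~\ref{lemma:tracking_error}.
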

\noindent \textit{Proof:} See Appendix C.

Next, we analyze the estimation errors introduced by the biased gradient oracle. For convenience, denote  $\hat{G}^t := \mathbb{E}[\|\nabla F(\mathbf{X}^t)\mathbf{1} - \mathbf{M}^t\mathbf{1}\|^2]$ and $\mathcal{G}^t := \mathbb{E}[\|\mathbf{M}^t - \nabla \mathbf{F}(\mathbf{X}^t)\|_F^2]$. 

\begin{lemma}[Recursion of Average Momentum Error] \label{lemma:avg_mom_error}
Under Assumptions 1 and 3, $\hat{G}^t$ satisfies
\begin{align}
    \hat{G}^{t+1} &\le (1-\lambda)\hat{G}^t + \left( \frac{2n}{\lambda} + 4\lambda n M_f \right) L^2\mathbb{E}[\|\mathbf{X}^{t+1} - \mathbf{X}^t\|_F^2] \nonumber \\
    &\quad + 4\lambda n M_f \mathbb{E}[\|\nabla \mathbf{F}(\mathbf{X}^t)\|_F^2] + 2\lambda n^2 \sigma_f^2 + \lambda^2 n \sigma^2.
\end{align}
\end{lemma}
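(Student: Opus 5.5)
We work with the column sums $\mathbf{m}^t := \mathbf{M}^t\mathbf{1}$ and $\mathbf{g}^t := \nabla\mathbf{F}(\mathbf{X}^t)\mathbf{1}$, so that $\hat G^t = \mathbb{E}\|\mathbf{g}^t-\mathbf{m}^t\|^2$. The plan is a one-step expansion of the momentum recursion, split into a deterministic drift term and a conditionally zero-mean noise term.

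\textbf{Step 1: decompose the error.} Multiplying (\ref{eq:update_M}) by $\mathbf{1}$ gives $\mathbf{m}^{t+1}=(1-\lambda)\mathbf{m}^t+\lambda\tilde{\mathbf{G}}(\mathbf{X}^{t+1})\mathbf{1}$. Write $\tilde{\mathbf{G}}(\mathbf{X}^{t+1})\mathbf{1}=\mathbf{b}^{t+1}+\boldsymbol{\xi}^{t+1}$, where $\mathbf{b}^{t+1}:=\sum_i\mathbb{E}[\tilde g_i(\mathbf{x}_i^{t+1})\mid\mathbf{x}_i^{t+1}]$ and $\boldsymbol{\xi}^{t+1}$ is the zero-mean part. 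Then
\begin{equation*}
\mathbf{g}^{t+1}-\mathbf{m}^{t+1}=(1-\lambda)(\mathbf{g}^t-\mathbf{m}^t)+(1-\lambda)(\mathbf{g}^{t+1}-\mathbf{g}^t)+\lambda(\mathbf{g}^{t+1}-\mathbf{b}^{t+1})-\lambda\boldsymbol{\xi}^{t+1}.
\end{equation*}
We take expectations conditioned on $\mathbf{X}^{t+1}$ together with the past. Since $\mathbf{X}^{t+1}$ is determined before the query, the noise cross term vanishes. Independence of the queries across agents then gives $\mathbb{E}\|\boldsymbol{\xi}^{t+1}\|^2\le n\sigma^2$, which accounts for the $\lambda^2 n\sigma^2$ term.

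\textbf{Step 2: Young's inequality on the deterministic part.} Apply $\|(1-\lambda)\mathbf{a}+\mathbf{c}\|^2\le(1-\lambda)\|\mathbf{a}\|^2+\frac1\lambda\|\mathbf{c}\|^2$, which is Jensen/Young with weights $1-\lambda$ and $\lambda$, taking $\mathbf{c}=(1-\lambda)(\mathbf{g}^{t+1}-\mathbf{g}^t)+\lambda(\mathbf{g}^{t+1}-\mathbf{b}^{t+1})$. Then bound $\frac1\lambda\|\mathbf{c}\|^2\le\frac{2}{\lambda}\|\mathbf{g}^{t+1}-\mathbf{g}^t\|^2+2\lambda\|\mathbf{g}^{t+1}-\mathbf{b}^{t+1}\|^2$.
\begin{itemize}
\item \textbf{Drift term.} Cauchy--Schwarz and $L$-smoothness give $\|\mathbf{g}^{t+1}-\mathbf{g}^t\|^2\le nL^2\|\mathbf{X}^{t+1}-\mathbf{X}^t\|_F^2$. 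This produces the $\frac{2n}{\lambda}L^2$ coefficient.
\item \textbf{Bias term.} By Cauchy--Schwarz and Assumption \ref{ass:oracle}, $\|\mathbf{g}^{t+1}-\mathbf{b}^{t+1}\|^2\le n\sum_i(M_f\|\nabla f_i(\mathbf{x}_i^{t+1})\|^2+\sigma_f^2)$. This produces the $2\lambda n^2\sigma_f^2$ term.
\end{itemize}

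\textbf{Step 3: shift the gradient back to time $t$.} Use $\|\nabla f_i(\mathbf{x}_i^{t+1})\|^2\le2\|\nabla f_i(\mathbf{x}_i^t)\|^2+2L^2\|\mathbf{x}_i^{t+1}-\mathbf{x}_i^t\|^2$. Multiplying by $2\lambda nM_f$ yields $4\lambda nM_f\|\nabla\mathbf{F}(\mathbf{X}^t)\|_F^2+4\lambda nM_fL^2\|\mathbf{X}^{t+1}-\mathbf{X}^t\|_F^2$. Together with Step 2, this gives exactly the coefficient $\big(\frac{2n}{\lambda}+4\lambda nM_f\big)L^2$. Taking total expectation then completes the recursion.

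The main obstacle is Step 1: making the conditional independence of $\boldsymbol{\xi}^{t+1}$ rigorous. This depends on Algorithm \ref{alg:biased_dmt} querying at $\mathbf{x}_i^{t+1}$, which is measurable before the sample, and on independence across agents to get $n\sigma^2$ rather than $n^2\sigma^2$. Beyond that, the work is bookkeeping of Young's-inequality constants so that they match the stated coefficients, in particular the factor $(1-\lambda)^2\le1$ absorbed into $\frac{2n}{\lambda}$.
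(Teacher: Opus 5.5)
Your proposal is correct and follows essentially the same route as the paper's proof. It uses the same four-term decomposition, with your $\mathbf{g}^{t+1}-\mathbf{b}^{t+1}$ playing the role of the paper's $-\mathbf{b}$. The remaining steps also coincide: the noise cross term vanishes conditionally, Young's inequality is applied with weights $(1-\lambda,\lambda)$ followed by the $2/\lambda$ and $2\lambda$ split, Jensen bounds handle the drift and bias terms, independence across agents gives $n\sigma^2$, and $\nabla\mathbf{F}(\mathbf{X}^{t+1})$ is shifted back to time $t$.
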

\noindent \textit{Proof:} See Appendix D.

\begin{lemma}[Recursion of Momentum Estimation Error] \label{lemma:mom_est_error}
 Under Assumptions 1 and 3, $\mathcal{G}^t$ satisfies
\begin{align}
    \mathcal{G}^{t+1} &\le (1-\lambda)\mathcal{G}^t + \left( \frac{2}{\lambda} + 4\lambda M_f \right) L^2\mathbb{E}[\|\mathbf{X}^{t+1} - \mathbf{X}^t\|_F^2] \nonumber \\
    &\quad + 4\lambda M_f \mathbb{E}[\|\nabla \mathbf{F}(\mathbf{X}^t)\|_F^2] + 2\lambda n \sigma_f^2 + \lambda^2 n \sigma^2.
\end{align}
\end{lemma}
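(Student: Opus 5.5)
We follow the same template as the proof of Lemma~\ref{lemma:avg_mom_error} (Appendix D), but work with the full Frobenius matrix error instead of the averaged vector. We start from the momentum update (\ref{eq:update_M}) and write
\begin{align*}
\mathbf{M}^{t+1} - \nabla\mathbf{F}(\mathbf{X}^{t+1}) ={}& (1-\lambda)\big(\mathbf{M}^t - \nabla\mathbf{F}(\mathbf{X}^t)\big) \\
&+ (1-\lambda)\big(\nabla\mathbf{F}(\mathbf{X}^t) - \nabla\mathbf{F}(\mathbf{X}^{t+1})\big) \\
&+ \lambda\big(\mathbb{E}\tilde{\mathbf{G}}(\mathbf{X}^{t+1}) - \nabla\mathbf{F}(\mathbf{X}^{t+1})\big) \\
&+ \lambda\big(\tilde{\mathbf{G}}(\mathbf{X}^{t+1}) - \mathbb{E}\tilde{\mathbf{G}}(\mathbf{X}^{t+1})\big).
\end{align*}
Here $\mathbb{E}$ in the third and fourth terms is conditional on $\mathbf{X}^{t+1}$. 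The last term has zero conditional mean given everything up to the query point $\mathbf{X}^{t+1}$, so its cross terms vanish. Its second moment is at most $\lambda^2 n\sigma^2$ by the variance part of Assumption~\ref{ass:oracle}, summed over the $n$ columns. This yields the $\lambda^2 n\sigma^2$ term directly.

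For the remaining deterministic part $A + B + C$, with $A$ the contracted error, $B$ the gradient drift and $C$ the bias, we apply Young's inequality in the form $\|A + (B+C)\|^2 \le (1+\lambda)\|A\|^2 + (1+1/\lambda)\|B+C\|^2$. Since $(1+\lambda)(1-\lambda)^2 \le 1-\lambda$, the first factor gives the $(1-\lambda)\mathcal{G}^t$ contraction. For the drift we use $(1+1/\lambda)(1-\lambda)^2 \le 1/\lambda$ together with $L$-smoothness (Assumption~\ref{ass:smoothness}), column by column. With a split $\|B+C\|^2 \le 2\|B\|^2 + 2\|C\|^2$, this produces the $\frac{2}{\lambda}L^2\mathbb{E}\|\mathbf{X}^{t+1}-\mathbf{X}^t\|_F^2$ term. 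The bias term then carries the factor $(1+1/\lambda)\lambda^2 \cdot 2 \le 4\lambda$, using $\lambda \le 1$.

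The bias term is bounded by Assumption~\ref{ass:oracle}:
\[
\|C\|_F^2 \le M_f\|\nabla\mathbf{F}(\mathbf{X}^{t+1})\|_F^2 + n\sigma_f^2.
\]
The gradient is evaluated at $\mathbf{X}^{t+1}$, so we shift it back with
\[
\|\nabla\mathbf{F}(\mathbf{X}^{t+1})\|_F^2 \le 2\|\nabla\mathbf{F}(\mathbf{X}^{t})\|_F^2 + 2L^2\|\mathbf{X}^{t+1}-\mathbf{X}^t\|_F^2 .
\]
Collecting terms gives the $4\lambda M_f$ coefficients on both $\mathbb{E}\|\nabla\mathbf{F}(\mathbf{X}^t)\|_F^2$ and the $L^2$ movement term.

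\textbf{Main obstacle.} The difficulty is in the constant bookkeeping: the pattern $4\lambda M_f$ versus $2\lambda n\sigma_f^2$ needs an asymmetric treatment. To handle it, we would bound $\lambda\|C\|$ separately, using $\|C\|^2 \le 2M_f\|\nabla\mathbf{F}(\mathbf{X}^t)\|^2 + 2M_fL^2\|\Delta\mathbf{X}\|^2 + n\sigma_f^2$. We would then absorb the factors so that the bias piece enters with weight $2\lambda$ overall, for instance by grouping $C$ with $A$ via $(1+\lambda)$/$(1+1/\lambda)$ weights chosen so that the drift receives the $2/\lambda$ and the bias receives $2\lambda$. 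If the exact constants do not come out, a slightly looser constant is harmless for the subsequent Lyapunov argument.

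A second subtlety is the measurability of the noise term. The query point $\mathbf{X}^{t+1}$ is determined before the sample is drawn, so conditioning on the $\sigma$-algebra generated up to $\mathbf{X}^{t+1}$ kills the cross terms legitimately. This is exactly the reason the algorithm queries at $x_i^{t+1}$.
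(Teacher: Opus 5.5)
Your decomposition, the conditioning argument that isolates $\lambda^2 n\sigma^2$, the split $\|B+C\|^2\le 2\|B\|^2+2\|C\|^2$, and the shift of $\|\nabla\mathbf{F}(\mathbf{X}^{t+1})\|_F^2$ back to time $t$ are exactly the paper's steps. However, the argument as written does not give the stated constants. With Young weights $(1+\lambda,\,1+1/\lambda)$, the bias enters with coefficient $2\lambda^2(1+1/\lambda)=2\lambda(1+\lambda)$, which you bound by $4\lambda$. Carrying this through yields
\begin{align*}
\mathcal{G}^{t+1}\le{}&(1-\lambda)\mathcal{G}^t+\Big(\frac{2}{\lambda}+8\lambda M_f\Big)L^2\,\mathbb{E}[\|\mathbf{X}^{t+1}-\mathbf{X}^t\|_F^2]\\
&+8\lambda M_f\,\mathbb{E}[\|\nabla\mathbf{F}(\mathbf{X}^t)\|_F^2]+4\lambda n\sigma_f^2+\lambda^2 n\sigma^2 .
\end{align*}
So ``collecting terms gives the $4\lambda M_f$ coefficients'' is an arithmetic slip. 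Falling back on ``a looser constant is harmless'' proves a weaker lemma than the one stated. These exact coefficients also feed into $C_{\nabla F}$ and $C_{\sigma_f^2}$, and hence into the explicit constants of Theorem~\ref{thm:main}.

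The ``asymmetric treatment'' you worry about is not needed. Once the bias carries weight exactly $2\lambda$, Assumption~\ref{ass:oracle} contributes $2\lambda n\sigma_f^2$. The factor-$2$ shift inequality then multiplies only the $M_f$ part, so $4\lambda M_f$ appears on both the gradient term and the movement term automatically. The missing ingredient is the Young parameter $\alpha=\lambda/(1-\lambda)$, i.e.
\[
\|P+Q\|_F^2\le\frac{1}{1-\lambda}\|P\|_F^2+\frac{1}{\lambda}\|Q\|_F^2,\qquad P=(1-\lambda)\big(\mathbf{M}^t-\nabla\mathbf{F}(\mathbf{X}^t)\big),
\]
with $Q$ the drift-plus-bias part. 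This gives exactly $(1-\lambda)\mathcal{G}^t$, a drift weight $\frac{2(1-\lambda)^2}{\lambda}\le\frac{2}{\lambda}$, and a bias weight $\frac{2\lambda^2}{\lambda}=2\lambda$, which are precisely the stated constants.

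With your equal split, this $\alpha$ is in fact the only admissible choice. The condition $(1+\alpha)(1-\lambda)^2\le 1-\lambda$ forces $\alpha\le\lambda/(1-\lambda)$, while $2(1+1/\alpha)\lambda^2\le 2\lambda$ forces $\alpha\ge\lambda/(1-\lambda)$. The case $\lambda=1$ is immediate, since the contracted-error and drift terms vanish.
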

\noindent \textit{Proof:} See Appendix E.

\begin{lemma}[Bound on Parameter Difference] \label{lemma:param_diff}
Under Assumption 2, the expected squared change in the model parameters between two consecutive iterations can be bounded by the consensus error $\Xi_x^t$, the tracking error $\Xi_v^t$, the average momentum error $\hat{G}^t$, and the true global gradient norm
\begin{align}
    \mathbb{E}[\|\mathbf{X}^{t+1} - \mathbf{X}^t\|_F^2] &\le (12 + 9\eta^2 L^2) \Xi_x^t + 3\eta^2 \Xi_v^t \nonumber \\
    &\quad + \frac{9\eta^2}{n} \hat{G}^t + 9n\eta^2 \mathbb{E}[\|\nabla F(\bar{\mathbf{x}}^t)\|^2]. \label{eq:param_diff}
\end{align}
\end{lemma}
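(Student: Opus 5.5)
We start from the update rule \eqref{eq:update_X}, which gives $\mathbf{X}^{t+1}-\mathbf{X}^t = \mathbf{X}^t(\mathbf{W}-\mathbf{I}_n) - \eta\mathbf{V}^t$. Since $\mathbf{W}$ is doubly stochastic, $\bar{\mathbf{X}}^t(\mathbf{W}-\mathbf{I}_n)=\mathbf{0}$. Hence the first term equals $(\mathbf{X}^t-\bar{\mathbf{X}}^t)(\mathbf{W}-\mathbf{I}_n)$, and because $\|\mathbf{W}-\mathbf{I}_n\|_2\le 2$ its squared norm is at most $4\Xi_x^t$. For the second term we split $\mathbf{V}^t = (\mathbf{V}^t-\bar{\mathbf{V}}^t) + \bar{\mathbf{V}}^t$ and apply $\|a+b+c\|^2\le 3(\|a\|^2+\|b\|^2+\|c\|^2)$. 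This yields
\[
\mathbb{E}\|\mathbf{X}^{t+1}-\mathbf{X}^t\|_F^2 \le 12\,\Xi_x^t + 3\eta^2\Xi_v^t + 3\eta^2\,\mathbb{E}\|\bar{\mathbf{V}}^t\|_F^2,
\]
which already accounts for the constant $12$ and the term $3\eta^2\Xi_v^t$ in \eqref{eq:param_diff}.

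Next we handle $\bar{\mathbf{V}}^t$ through the tracking invariant. Right-multiplying \eqref{eq:update_V} by $\mathbf{1}_n/n$ and using $\mathbf{W}\mathbf{1}_n=\mathbf{1}_n$ gives $\bar{\mathbf{v}}^{t+1}-\bar{\mathbf{m}}^{t+1} = \bar{\mathbf{v}}^t-\bar{\mathbf{m}}^t$. With the initialization $\mathbf{v}_i^0=\mathbf{m}_i^0$, induction gives $\bar{\mathbf{v}}^t=\bar{\mathbf{m}}^t = \frac1n\mathbf{M}^t\mathbf{1}$ for all $t$, and therefore $\|\bar{\mathbf{V}}^t\|_F^2 = n\|\bar{\mathbf{m}}^t\|^2 = \frac1n\|\mathbf{M}^t\mathbf{1}\|^2$.

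We then decompose
\[
\mathbf{M}^t\mathbf{1} = (\mathbf{M}^t\mathbf{1}-\nabla\mathbf{F}(\mathbf{X}^t)\mathbf{1}) + (\nabla\mathbf{F}(\mathbf{X}^t)\mathbf{1} - n\nabla F(\bar{\mathbf{x}}^t)) + n\nabla F(\bar{\mathbf{x}}^t)
\]
and apply the three-term inequality again, so that $\frac1n\mathbb{E}\|\mathbf{M}^t\mathbf{1}\|^2 \le \frac3n\hat{G}^t + \frac3n\mathbb{E}\|\nabla\mathbf{F}(\mathbf{X}^t)\mathbf{1}-n\nabla F(\bar{\mathbf{x}}^t)\|^2 + 3n\mathbb{E}\|\nabla F(\bar{\mathbf{x}}^t)\|^2$. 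The middle term equals $\|\sum_i(\nabla f_i(\mathbf{x}_i^t)-\nabla f_i(\bar{\mathbf{x}}^t))\|^2$. By Cauchy--Schwarz and $L$-smoothness (Assumption~\ref{ass:smoothness}) it is at most $nL^2\Xi_x^t$, so after dividing by $n$ it contributes $3L^2\Xi_x^t$.

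Multiplying by $3\eta^2$, the three pieces give $\frac{9\eta^2}{n}\hat{G}^t$, $9\eta^2L^2\Xi_x^t$, and $9n\eta^2\mathbb{E}\|\nabla F(\bar{\mathbf{x}}^t)\|^2$, exactly the remaining terms of \eqref{eq:param_diff}. The argument is mechanical. The one step that needs care is the identity $\bar{\mathbf{v}}^t=\bar{\mathbf{m}}^t$, which is what lets the average momentum error $\hat{G}^t$ replace the unknown $\bar{\mathbf{V}}^t$; it relies on both double stochasticity and the initialization $\mathbf{V}^0=\mathbf{M}^0$. A secondary check is the operator-norm bound $\|\mathbf{W}-\mathbf{I}_n\|_2\le 2$, which holds because the eigenvalues of $\mathbf{W}$ are real with modulus at most $1$. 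If $\mathbf{W}$ is not assumed symmetric, we would instead use $\|\mathbf{X}\mathbf{W}-\mathbf{X}\|_F\le\|\mathbf{X}\mathbf{W}\|_F+\|\mathbf{X}\|_F\le 2\|\mathbf{X}\|_F$, applied to $\mathbf{X}^t-\bar{\mathbf{X}}^t$; this follows from $\|\mathbf{W}\|_2\le 1$, which holds for any doubly stochastic $\mathbf{W}$ by the Schur test.
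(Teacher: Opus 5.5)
Your proof is correct, and up to the estimate $\mathbb{E}[\|\mathbf{X}^{t+1}-\mathbf{X}^t\|_F^2]\le 12\Xi_x^t+3\eta^2\Xi_v^t+3n\eta^2\mathbb{E}[\|\bar{\mathbf{m}}^t\|^2]$ it follows the paper's argument exactly. You use the same splitting, the same bound $\|\mathbf{W}-\mathbf{I}_n\|_2\le 2$, and the same identity $\bar{\mathbf{v}}^t=\bar{\mathbf{m}}^t$.

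The paper's written proof, however, stops at that intermediate estimate and declares the lemma proved. It never converts $\bar{\mathbf{m}}^t$ into $\hat{G}^t$ and $\nabla F(\bar{\mathbf{x}}^t)$. Your three-way decomposition of $\mathbf{M}^t\mathbf{1}$, together with the Cauchy--Schwarz/smoothness bound on $\|\nabla\mathbf{F}(\mathbf{X}^t)\mathbf{1}-n\nabla F(\bar{\mathbf{x}}^t)\|^2$, is exactly the step needed to reach the constants $9\eta^2L^2$, $9\eta^2/n$ and $9n\eta^2$. So your version is the one that actually proves the lemma as written. That step uses $L$-smoothness, so the statement really requires Assumption~\ref{ass:smoothness} in addition to Assumption~\ref{ass:network}.

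The two forms are useful for different things. The proof of Theorem~\ref{thm:main} substitutes the intermediate $\bar{\mathbf{m}}^t$ form. There, $3n\eta^2C_{\Delta X}\mathbb{E}[\|\bar{\mathbf{m}}^t\|^2]$ is cancelled directly by the $-\frac{\eta}{4}\mathbb{E}[\|\bar{\mathbf{m}}^t\|^2]$ buffer from Lemma~\ref{lemma:descent}. Your fully expanded form is written purely in terms of the Lyapunov quantities and the gradient norm. The cost is the extra factor $3$ from the second three-term inequality and an additional $9\eta^2L^2\Xi_x^t$ contribution.

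Your caveat about $\|\mathbf{W}-\mathbf{I}_n\|_2\le 2$ is well placed. The paper deduces it from the eigenvalues of $\mathbf{W}-\mathbf{I}_n$, which is valid only for symmetric (or normal) $\mathbf{W}$. Your route through $\|\mathbf{W}\|_2\le 1$ (Schur test, which uses nonnegativity of the entries) holds for every doubly stochastic $\mathbf{W}$.
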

\textit{Proof:} See Appendix F.

\subsection{Main Convergence Results}
Using the $L$-smoothness property and leveraging the auxiliary bounds established in the previous subsection, we can establish the descent property of the global objective function.

\begin{lemma}[Descent Lemma] \label{lemma:descent}
Suppose Assumption 1 holds. For any step size $\eta \le \frac{1}{L}$, the expected objective function value satisfies
\begin{align}
    \mathbb{E}[F(\bar{\mathbf{x}}^{t+1})] &\le \mathbb{E}[F(\bar{\mathbf{x}}^t)] - \frac{\eta}{2} \mathbb{E}[\|\nabla F(\bar{\mathbf{x}}^t)\|^2] \nonumber \\
    &\quad - \frac{\eta}{2}(1 - L\eta) \mathbb{E}[\|\bar{\mathbf{m}}^t\|^2] \nonumber \\
    &\quad + \frac{\eta}{n^2} \hat{G}^t + \frac{\eta L^2}{n} \Xi_x^t.
\end{align}
\end{lemma}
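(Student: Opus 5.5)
The plan is to work entirely with the averaged iterates and use the double stochasticity of $\mathbf{W}$ (Assumption 2) to obtain clean averaged dynamics. Right-multiplying (\ref{eq:update_X}) by $\frac{1}{n}\mathbf{1}_n$ gives $\bar{\mathbf{x}}^{t+1} = \bar{\mathbf{x}}^t - \eta \bar{\mathbf{v}}^t$. Averaging (\ref{eq:update_V}) gives $\bar{\mathbf{v}}^{t+1} - \bar{\mathbf{m}}^{t+1} = \bar{\mathbf{v}}^t - \bar{\mathbf{m}}^t$. Together with the initialization $\mathbf{v}_i^0 = \mathbf{m}_i^0$, induction yields $\bar{\mathbf{v}}^t = \bar{\mathbf{m}}^t$ for all $t$. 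Hence the averaged model performs the inexact gradient step $\bar{\mathbf{x}}^{t+1} = \bar{\mathbf{x}}^t - \eta \bar{\mathbf{m}}^t$, where $\bar{\mathbf{m}}^t = \frac1n \mathbf{M}^t\mathbf{1}$.

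Next apply $L$-smoothness of $F$, which follows from Assumption 1 since each $f_i$ is $L$-smooth:
\[
F(\bar{\mathbf{x}}^{t+1}) \le F(\bar{\mathbf{x}}^t) - \eta\langle \nabla F(\bar{\mathbf{x}}^t), \bar{\mathbf{m}}^t\rangle + \tfrac{L\eta^2}{2}\|\bar{\mathbf{m}}^t\|^2 .
\]
Rewrite the inner product with the polarization identity $-\langle a,b\rangle = -\frac12\|a\|^2 - \frac12\|b\|^2 + \frac12\|a-b\|^2$. This produces the terms $-\frac{\eta}{2}\|\nabla F(\bar{\mathbf{x}}^t)\|^2$ and $-\frac{\eta}{2}(1-L\eta)\|\bar{\mathbf{m}}^t\|^2$ appearing in the statement, plus the error term $\frac{\eta}{2}\|\nabla F(\bar{\mathbf{x}}^t) - \bar{\mathbf{m}}^t\|^2$. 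The condition $\eta\le 1/L$ only ensures the momentum term has the correct sign; it is not otherwise needed.

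The remaining step, which is the only one requiring care, is to bound the error term. Insert the averaged local gradients $\frac1n\nabla\mathbf{F}(\mathbf{X}^t)\mathbf{1}$ and split using $\|a+b\|^2\le 2\|a\|^2+2\|b\|^2$:
\[
\|\nabla F(\bar{\mathbf{x}}^t) - \bar{\mathbf{m}}^t\|^2 \le 2\Big\|\tfrac1n\textstyle\sum_i(\nabla f_i(\bar{\mathbf{x}}^t) - \nabla f_i(\mathbf{x}_i^t))\Big\|^2 + \tfrac{2}{n^2}\|\nabla\mathbf{F}(\mathbf{X}^t)\mathbf{1} - \mathbf{M}^t\mathbf{1}\|^2 .
\]

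For the first term, Jensen's inequality and $L$-smoothness give the bound $\frac{2L^2}{n}\|\mathbf{X}^t - \bar{\mathbf{X}}^t\|_F^2$. The second term is exactly $\frac{2}{n^2}$ times the quantity whose expectation is $\hat{G}^t$. Multiplying by $\frac{\eta}{2}$ and taking total expectation yields $\frac{\eta}{n^2}\hat{G}^t + \frac{\eta L^2}{n}\Xi_x^t$, matching the claimed constants. No conditioning subtleties arise, because every quantity in the bound is measurable at time $t$ and no stochastic-gradient noise enters directly. I therefore expect no real obstacle, the only delicate point being to verify the tracking invariant $\bar{\mathbf{v}}^t=\bar{\mathbf{m}}^t$.
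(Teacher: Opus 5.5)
Your proposal is correct and follows the paper's proof essentially step for step: the tracking invariant $\bar{\mathbf{v}}^t=\bar{\mathbf{m}}^t$ from the initialization and double stochasticity, then the $L$-smoothness inequality with the polarization identity, then the split of $\|\bar{\mathbf{m}}^t-\nabla F(\bar{\mathbf{x}}^t)\|^2$ into $\frac{2}{n^2}\hat{G}^t+\frac{2L^2}{n}\Xi_x^t$ via Jensen and smoothness. You are also right on two side points: the averaged dynamics rely on Assumption 2 even though the statement cites only Assumption 1, and $\eta\le 1/L$ serves only to fix the sign of the momentum term.
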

\noindent \textit{Proof:} See Appendix G

\begin{remark}
Lemma \ref{lemma:descent} reveals the core mechanism of our algorithm. The first negative term $-\frac{\eta}{2} \mathbb{E}[\|\nabla F(\bar{\mathbf{x}}^t)\|^2]$ drives the convergence. The second negative term involving $\|\bar{\mathbf{m}}^t\|^2$ acts as a crucial buffer to absorb the corresponding positive momentum errors accumulated in Lemma \ref{lemma:param_diff}, provided that $\eta$ is sufficiently small. Crucially, the error terms $\hat{G}^t$ and $\Xi_x^t$ are scaled by $\frac{1}{n^2}$ and $\frac{1}{n}$ respectively, which is the mathematical key to achieving the linear speedup with respect to the network size $n$.
\end{remark}

To formally state our results, we define the data heterogeneity bound commonly used in decentralized literature: $\frac{1}{n} \sum_{i=1}^n \|\nabla f_i(\mathbf{x}) - \nabla F(\mathbf{x})\|^2 \le \zeta^2$ for all $\mathbf{x}$. Let the sequence $\{\mathbf{X}^t\}$ be generated by the Biased-DMT algorithm.

\begin{theorem}[Convergence Bound] \label{thm:main}
Suppose Assumptions 1, 2, and 3 hold. Assume the relative bias ratio is bounded such that $M_f \le \frac{1}{256}$.  If the momentum parameter $\lambda$ and the step size $\eta$ satisfy the topology-aware conditions $\lambda \le \frac{\rho}{4\sqrt{n}}$ and $\eta \le \min \left\{ \frac{1}{L}, \frac{\rho \lambda}{8L}, \frac{\lambda}{16L (1+M_f)} \right\}$, then the averaged expected squared gradient norm over $T$ iterations is explicitly bounded by
\begin{align} \label{eq:thm_explicit}
    \frac{1}{T} \sum_{t=0}^{T-1} \mathbb{E}[\|\nabla F(\bar{\mathbf{x}}^t)\|^2] &\le \frac{4\Phi^0}{\eta T} + 64M_f\left(1 + \frac{9\lambda^2 n}{4\rho^2}\right)\zeta^2 \nonumber \\
    &\quad + \frac{8\lambda}{n} \left( 1 + \frac{3\lambda n^2}{2\rho^2} + \frac{3\lambda^2 n^2}{2\rho^2} \right) \sigma^2 \nonumber \\
    &\quad + 16\left(1 + \frac{9\lambda^2 n}{4\rho^2}\right)\sigma_f^2,
\end{align}
where $\Phi^0$ is the initial value of the constructed Lyapunov function.
\end{theorem}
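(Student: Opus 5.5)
We build a Lyapunov function that combines the descent quantity with the four error sequences controlled by the auxiliary lemmas:
\begin{equation*}
\Phi^t := \mathbb{E}[F(\bar{\mathbf{x}}^t)] - F^* + c_x \Xi_x^t + c_v \Xi_v^t + c_{\hat G}\hat{G}^t + c_{\mathcal G}\mathcal{G}^t,
\end{equation*}
with nonnegative weights chosen below. The aim is a one-step inequality
\begin{equation*}
\Phi^{t+1} \le \Phi^t - \frac{\eta}{4}\mathbb{E}[\|\nabla F(\bar{\mathbf{x}}^t)\|^2] + \eta E,
\end{equation*}
where $E$ collects the constant error terms. Telescoping over $t=0,\dots,T-1$ and using $\Phi^T\ge 0$ then gives the bound $\frac{4\Phi^0}{\eta T} + 4E$. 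This explains the factor $4$ in front of $\Phi^0$ and in front of each constant.

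\textbf{Step 1: combine the recursions.} We start from Lemma~\ref{lemma:descent}. To it we add $c_x$ times Lemma~\ref{lemma:consensus_error}, $c_v$ times Lemma~\ref{lemma:tracking_error} (with Lemma~\ref{lemma:mom_change} substituted for the momentum change), $c_{\hat G}$ times Lemma~\ref{lemma:avg_mom_error}, and $c_{\mathcal G}$ times Lemma~\ref{lemma:mom_est_error}. Every occurrence of $\mathbb{E}\|\mathbf{X}^{t+1}-\mathbf{X}^t\|_F^2$ is then replaced by Lemma~\ref{lemma:param_diff}. The local gradient matrix is handled by the decomposition
\begin{equation*}
\|\nabla\mathbf{F}(\mathbf{X}^t)\|_F^2 \le 3L^2\Xi_x^t + 3n\zeta^2 + 3n\|\nabla F(\bar{\mathbf{x}}^t)\|^2 .
\end{equation*}
This is the only place $\zeta^2$ enters, and it always carries a factor $M_f$, which is why the heterogeneity term vanishes when $M_f=0$.

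\textbf{Step 2: choose the weights.} The weights are set by natural scalings:
\begin{itemize}
\item $c_{\hat G}\approx \frac{2\eta}{\lambda n^2}$, so that the $(1-\lambda)$ contraction absorbs the $\frac{\eta}{n^2}\hat G^t$ term of the descent lemma;
\item $c_x\approx \frac{2\eta L^2}{\rho n}$, to absorb $\frac{\eta L^2}{n}\Xi_x^t$;
\item $c_v\approx \frac{2\eta^2}{\rho} c_x$, to absorb the $\frac{\eta^2}{\rho}\Xi_v^t$ feed into $\Xi_x$;
\item $c_{\mathcal G}\approx \frac{2c_v\cdot 3\lambda^2}{\rho\lambda}$, to absorb the $\frac{3\lambda^2}{\rho}\mathcal{G}^t$ term that Lemma~\ref{lemma:mom_change} injects into $\Xi_v$.
\end{itemize}

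With these weights we check that the remaining coefficients are dominated. The leftover coefficients of $\Xi_x^t$, $\Xi_v^t$, $\hat G^t$ and $\mathcal G^t$ must be nonpositive. The total coefficient of $\|\nabla F(\bar{\mathbf{x}}^t)\|^2$ must be at most $\frac{\eta}{4}$ in size; these contributions come from $9n\eta^2\cdot L^2(\frac{2}{\lambda}+\dots)$ and from the $M_f$ terms. The conditions
\begin{equation*}
\eta\le \frac{\rho\lambda}{8L},\qquad \eta\le\frac{\lambda}{16L(1+M_f)},\qquad \lambda\le \frac{\rho}{4\sqrt n},\qquad M_f\le \frac{1}{256}
\end{equation*}
are exactly what make each product small. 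For example, $c_{\hat G}\cdot\frac{2n}{\lambda}L^2\cdot 9n\eta^2 = O(\eta^3L^2/\lambda^2)\le O(\eta/256)$. Likewise, $64M_f\cdot 3\le \frac14$ handles the $M_f$ gradient term. The negative $\|\bar{\mathbf m}^t\|^2$ term can simply be dropped, since Lemma~\ref{lemma:param_diff} is already expressed through $\hat G^t$ and $\nabla F(\bar{\mathbf{x}}^t)$.

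\textbf{Step 3: collect the constants.} The $\sigma_f^2$ contributions are
\begin{equation*}
c_{\hat G}\cdot 2\lambda n^2\sigma_f^2=4\eta\sigma_f^2,
\end{equation*}
together with a term of order $\eta\frac{\lambda^2 n}{\rho^2}\sigma_f^2$ coming through $c_v\cdot\frac{3\lambda^2 n}{\rho}$ and $c_{\mathcal G}$. After multiplying by $4/\eta$ this gives $16(1+\frac{9\lambda^2 n}{4\rho^2})\sigma_f^2$. The $\sigma^2$ contributions are
\begin{equation*}
c_{\hat G}\lambda^2 n\sigma^2 = \frac{2\eta\lambda}{n}\sigma^2,
\end{equation*}
plus the $\frac{\lambda n^2}{\rho^2}$ and $\frac{\lambda^2 n^2}{\rho^2}$ corrections from the tracking and momentum-estimation chains. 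The $M_f\zeta^2$ terms follow the same pattern as $\sigma_f^2$, with an extra factor of 4.

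\textbf{Main obstacle.} The hardest step is the bookkeeping in Step 2: the circular coupling $\mathcal{G}\to\Xi_v\to\Xi_x\to\|\mathbf X^{t+1}-\mathbf X^t\|\to(\hat G,\mathcal G,\Xi_v)$. We need to verify that the chosen weights close this loop with the stated numerical constants. We would first fix $c_{\hat G}$ and $c_x$, then propagate the others, and finally verify each absorption inequality one by one. If a constant fails, we would tighten it by slightly shrinking the weight ratios, which the slack in the factor-8 and factor-16 step-size conditions should permit.
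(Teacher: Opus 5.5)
Your skeleton (a five-term Lyapunov function, telescoping, dividing by $\frac{\eta}{4}$) is the paper's. However, the step you defer as bookkeeping is where the content of the theorem lies, and the concrete weights you propose do not close the loop.

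After substituting Lemma~\ref{lemma:param_diff}, the total weight on $\mathbb{E}\|\mathbf{X}^{t+1}-\mathbf{X}^t\|_F^2$ is $C_{\Delta X}=\frac{3c_v\lambda^2L^2(1+2M_f)}{\rho}+c_{\hat G}(\frac{2n}{\lambda}+4\lambda nM_f)L^2+c_{\mathcal G}(\frac{2}{\lambda}+4\lambda M_f)L^2\ge\frac{4\eta L^2}{\lambda^2 n}$. This weight feeds $12C_{\Delta X}\Xi_x^t$ and $3\eta^2C_{\Delta X}\Xi_v^t$ back into the recursion. Two consequences follow:
\begin{itemize}
\item You need $c_x\ge\frac{48\eta L^2}{\rho\lambda^2 n}$, a factor $24/\lambda^2$ larger than your $\frac{2\eta L^2}{\rho n}$.
\item Lemma~\ref{lemma:consensus_error} injects $c_x\frac{\eta^2}{\rho}\Xi_v^t$, while the tracking contraction only returns $c_v\rho\,\Xi_v^t$. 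So you need $c_v\ge\frac{\eta^2}{\rho^2}c_x+\frac{3\eta^2}{\rho}C_{\Delta X}$, not $\frac{2\eta^2}{\rho}c_x$.
\end{itemize}
These are not small-factor errors that shrinking the ratios could repair.

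More importantly, Step~3 is not derived from Step~2. Every constant in the bound depends only on $c_v,c_{\hat G},c_{\mathcal G}$:
\begin{itemize}
\item $C_{\sigma^2}=(\frac{3c_v}{\rho}+c_{\hat G}+c_{\mathcal G})\lambda^2 n$,
\item $C_{\sigma_f^2}=\frac{3c_v\lambda^2 n}{\rho}+2\lambda n^2c_{\hat G}+2\lambda nc_{\mathcal G}$,
\item $C_{\nabla F}=M_f(\frac{6c_v\lambda^2}{\rho}+4\lambda nc_{\hat G}+4\lambda c_{\mathcal G})$.
\end{itemize}
The stated factors $\frac{9\lambda^2 n}{4\rho^2}$, $\frac{3\lambda n^2}{2\rho^2}$, $\frac{3\lambda^2 n^2}{2\rho^2}$ appear exactly for the paper's choice $c_v=\frac{\eta}{\rho}$, $c_{\hat G}=\frac{2\eta}{\lambda n^2}$, $c_{\mathcal G}=\frac{3\eta\lambda}{\rho^2}$; the last of these makes the $\mathcal{G}^t$ coefficient exactly zero. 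Your $c_v$ is of order $\eta^3L^2/(\rho^2 n)$. Smaller weights, if the loop closed with them, would give smaller constants, but you never compute them.

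This is the missing step. Fix the three weights that determine the constants. Define $c_x$ implicitly as whatever makes the total $\Xi_x^t$ coefficient vanish, which absorbs $\frac{\eta L^2}{n}$, $12C_{\Delta X}$ and the $L^2C_{\nabla F}$ term at once. Only then verify the $\Xi_v^t$, $\hat G^t$, momentum and gradient coefficients against the step-size conditions.

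Two further points fail as written.

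\textbf{The gradient split.} Your three-way split of $\|\nabla\mathbf{F}(\mathbf{X}^t)\|_F^2$ puts $3nC_{\nabla F}$ on $\zeta^2$. With $c_{\hat G}=\frac{2\eta}{\lambda n^2}$, the leading heterogeneity constant is then $\frac{4}{\eta}\cdot 3n\cdot\frac{8\eta M_f}{n}=96M_f>64M_f$. Your ``extra factor of 4'' silently assumes a factor $2$. The paper writes $\nabla f_i(\mathbf{x}_i^t)=(\nabla f_i(\mathbf{x}_i^t)-\nabla f_i(\bar{\mathbf{x}}^t))+\nabla f_i(\bar{\mathbf{x}}^t)$ and uses $\sum_i\langle\nabla f_i(\bar{\mathbf{x}}^t)-\nabla F(\bar{\mathbf{x}}^t),\nabla F(\bar{\mathbf{x}}^t)\rangle=0$. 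This gives $2n\|\nabla F(\bar{\mathbf{x}}^t)\|^2+2L^2\Xi_x^t+2n\zeta^2$. Separately, your check $64M_f\cdot 3\le\frac14$ is false at $M_f=\frac{1}{256}$.

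\textbf{Dropping the momentum buffer.} Discarding the $\|\bar{\mathbf{m}}^t\|^2$ term is not free. The paper keeps $-\frac{\eta}{4}\|\bar{\mathbf{m}}^t\|^2$ and uses the parameter-difference bound in the form $12\Xi_x^t+3\eta^2\Xi_v^t+3n\eta^2\mathbb{E}\|\bar{\mathbf{m}}^t\|^2$, which is what Appendix~F actually proves. The momentum part is then absorbed via $3n\eta^2C_{\Delta X}\le\frac{\eta}{4}$, and the gradient coefficient only loses $2nC_{\nabla F}\le\frac{\eta}{8}$.

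In your route, the gradient coefficient must also pay $9n\eta^2C_{\Delta X}\ge\frac{36\eta^3L^2}{\lambda^2}$, about $0.14\eta$ at $\eta=\frac{\lambda}{16L(1+M_f)}$. With the paper's weights, this leaves slightly more than $\frac{\eta}{4}$ under the two-way split. Under your three-way split it leaves only about $0.24\eta$, e.g.\ at $n=1$, $\rho=1$, $\lambda=\frac14$, $M_f=\frac{1}{256}$. That breaks the factor $4$ in $\frac{4\Phi^0}{\eta T}$ and in front of each constant.
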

\textit{Proof:} See Appendix H. \hfill 

\begin{remark}
The explicit bound in \eqref{eq:thm_explicit} deeply reveals the dynamics of Biased-DMT and demonstrates its fundamental superiority over standard Biased-DSGD \cite{c18}.
\begin{itemize}
    \item \textbf{Decoupling of Heterogeneity.} In Biased-DSGD, data heterogeneity $\zeta^2$ scales with $\mathcal{O}(\frac{\zeta^2}{\rho^2})$. In contrast, our residual heterogeneity is strictly scaled by $M_f$. Under absolute bias ($M_f=0$), the $\zeta^2$ term vanishes entirely.
    \item \textbf{Variance Reduction of Pure Noise.} Unlike standard approaches where pure stochastic noise $\sigma^2$ and systematic bias $\sigma_f^2$ are heavily coupled, Biased-DMT perfectly isolates $\sigma^2$. The pure noise multiplier strictly scales with $\lambda$, which enables it to be completely absorbed into the transient rate without leaving an additional steady-state error floor.
    \item \textbf{Transient Acceleration.} The topology penalty in \eqref{eq:thm_explicit} is effectively absorbed by tuning $\lambda$. This decouples the step size $\eta$ from network constraints, allowing a more aggressive learning rate to shorten the transient phase.
\end{itemize}
\end{remark}

\begin{corollary}\label{cor:rate}
Under the conditions of Theorem \ref{thm:main}, we consider the standard absolute bias setting where the gradient oracle has no relative error ($M_f = 0$). Suppose the total number of iterations $T$ is sufficiently large such that $T \ge 16n^2 / \rho^2$. By selecting the dynamic momentum parameter $\lambda = \sqrt{\frac{n}{T}}$ and the step size $\eta = \frac{1}{16L}\sqrt{\frac{n}{T}}$, Biased-DMT achieves the linear speedup with respect to the network size $n$:
\begin{equation} \label{eq:cor_bound}
    \frac{1}{T} \sum_{t=0}^{T-1} \mathbb{E}[\|\nabla F(\bar{\mathbf{x}}^t)\|^2] \le \mathcal{O}\left( \frac{1}{\sqrt{nT}} \right) + \mathcal{O}\left( \sigma_f^2 \right).
\end{equation}

\end{corollary}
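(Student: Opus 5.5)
The plan is to plug the parameter choices into the explicit bound \eqref{eq:thm_explicit} of Theorem~\ref{thm:main} with $M_f=0$ and bound each term. The proof has three parts: verify the hypotheses of Theorem~\ref{thm:main}, substitute term by term, and control the initial Lyapunov value $\Phi^0$.

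\textbf{Step 1: verify the hypotheses.} With $\lambda=\sqrt{n/T}$, the condition $T\ge 16n^2/\rho^2$ gives $\lambda \le \sqrt{n}\,\rho/(4n) = \rho/(4\sqrt{n})$, as required. The condition $M_f=0\le 1/256$ is trivial.

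For the step size, $\eta=\lambda/(16L)$ meets $\eta\le \lambda/(16L(1+M_f))$ with equality, and it also gives $\eta\le 1/L$. The remaining condition $\eta\le \rho\lambda/(8L)$ is equivalent to $\rho\ge 1/2$. This is the main obstacle: it does not follow from the stated hypotheses in general. I would handle it in one of two ways:
\begin{itemize}
\item note that the constraint $\rho\lambda/(8L)$ is only needed where $\eta$ multiplies the tracking error in Lemma~\ref{lemma:consensus_error}, and there it can be replaced by the choice $\eta=\frac{\min\{1,2\rho\}}{16L}\sqrt{n/T}$, which changes only constants in the $\rho$-independent leading term; or
\item read the corollary as stated for the regime where these conditions hold, absorbing the factor $\rho$ into the $\mathcal{O}(\cdot)$.
\end{itemize}
I would try the first option and track how the extra $1/\rho$ enters the $\Phi^0/(\eta T)$ term. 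Since it multiplies $1/\sqrt{nT}$, it is hidden in the constant.

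\textbf{Step 2: substitute term by term.}
\begin{itemize}
\item The $\zeta^2$ term carries the factor $M_f$ and therefore vanishes.
\item The transient term is $4\Phi^0/(\eta T) = 64L\Phi^0/\sqrt{nT}$, using $\eta T = \sqrt{nT}/(16L)$.
\item For the noise term, $8\lambda\sigma^2/n = 8\sigma^2/\sqrt{nT}$. For the corrections, $\lambda^2 n^2/\rho^2 = n^3/(\rho^2 T)$, and $T\ge 16n^2/\rho^2$ gives $\lambda\sqrt{n}/\rho\le 1/4$. Hence $\frac{3\lambda n^2}{2\rho^2}\cdot\frac{\lambda}{n} = \frac{3\lambda^2 n}{2\rho^2}\le \frac{3}{32}$, so the $\sigma^2$ contributions sum to $\mathcal{O}(\sigma^2/\sqrt{nT})$ plus a term of order $\lambda^2 n/\rho^2$.
\item For the bias term, $9\lambda^2 n/(4\rho^2)\le 9/64$, so it is at most $16(1+9/64)\sigma_f^2=\mathcal{O}(\sigma_f^2)$.
\end{itemize}
The only delicate point here is the $\sigma^2$ correction $\frac{12\lambda^2 n}{\rho^2}\sigma^2 = \frac{12 n^2}{\rho^2 T}\sigma^2$. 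I would show it is $\mathcal{O}(1/\sqrt{nT})$ by requiring $T$ large. Specifically, $\frac{n^2}{\rho^2 T}\le \frac{1}{\sqrt{nT}}$ iff $T\ge n^5/\rho^4$. If the stated threshold does not give this, I would either absorb the term as a higher-order transient, consistent with the $\mathcal{O}$ notation in \eqref{eq:cor_bound}, or strengthen the lower bound on $T$.

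\textbf{Step 3: control $\Phi^0$.} Finally, $\Phi^0$ must not grow with $T$. From the initialization $\mathbf{x}_i^0=\bar{\mathbf{x}}^0$ we get $\Xi_x^0=0$. The tracking error $\Xi_v^0$ and the momentum errors $\hat G^0,\mathcal G^0$ are bounded by $n(\sigma^2+\sigma_f^2)$-type and heterogeneity-at-$\bar{\mathbf{x}}^0$ quantities. Their Lyapunov weights from Appendix~H scale like $\eta/\lambda=1/(16L)$, so $\Phi^0 = F(\bar{\mathbf{x}}^0)-F^*+\mathcal{O}(1)$, a $T$-independent constant. Collecting the three steps yields \eqref{eq:cor_bound}.
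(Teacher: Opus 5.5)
Your plan follows the paper's own proof. You substitute into the bound of Theorem~\ref{thm:main} with $M_f=0$, get $\lambda\le\rho/(4\sqrt n)$ from $T\ge 16n^2/\rho^2$, use $\Xi_x^0=0$, and treat $\frac{12n^2}{\rho^2T}\sigma^2$ and $\frac{12n^{5/2}}{\rho^2T^{3/2}}\sigma^2$ as higher order in $T$. The paper handles these two terms the same way, so your remark that $n$-uniform constants need $T\gtrsim n^5/\rho^4$ applies to it too.

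The obstacle you flag is real, and the paper's proof skips it. It checks only $\lambda\le\rho/(4\sqrt n)$ and $\eta\le 1/L$, never $\eta\le\rho\lambda/(8L)$. For $\eta=\lambda/(16L)$ that condition is equivalent to $\rho\ge 1/2$. It is not a formality. With $c_2=\eta/\rho$, $c_3=2\eta/(\lambda n^2)$ and $c_1$ fixed by $A_2=0$, the coefficient $A_3=\eta-c_1\eta^2/\rho-3\eta^2C_{\Delta X}$ of Appendix~H satisfies $c_1\eta^2/\rho\ge\frac{48\eta^3L^2}{\rho^2\lambda^2 n}$. At $\eta=\lambda/(16L)$ this lower bound equals $\frac{3\eta}{16n\rho^2}$. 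Hence $A_3<0$ as soon as $n\rho^2<3/16$, which holds for sparse graphs such as rings, where $\rho=\Theta(1/n^2)$.

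So your second option is the accurate reading: derived from Theorem~\ref{thm:main}, the corollary with the stated $\eta$ is justified only for $\rho\ge 1/2$. Your first option proves a different statement, and you should not describe it as changing only constants in a $\rho$-independent leading term. The transient becomes $\frac{64L\Phi^0}{\min\{1,2\rho\}\sqrt{nT}}$, which reintroduces exactly the $1/\rho$ penalty the paper claims to absorb.

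Step~3 needs tightening. Bounding every Lyapunov weight by $\eta/\lambda=1/(16L)$ is too crude for a linear-speedup claim. Since $\Xi_v^0$ and $\mathcal{G}^0$ are of order $n$, that bound leaves an $\mathcal{O}(n)$ piece in $\Phi^0$, and the leading term becomes of order $\sqrt{n/T}$, with no speedup. Use the actual weights, as the paper does:
\begin{itemize}
\item $c_2=\eta/\rho$ contributes $\frac{4\Xi_v^0}{\rho T}$ to $\frac{4\Phi^0}{\eta T}$, which is higher order in $T$.
\item $c_4=3\eta\lambda/\rho^2$ contributes $\frac{12\lambda\mathcal{G}^0}{\rho^2T}$, also higher order in $T$.
\item Only $c_3=\frac{1}{8Ln^2}$ survives at order $1/\sqrt{nT}$. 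Since $\hat{G}^0\le n^2\sigma_f^2+n\sigma^2$ when $M_f=0$, you get $c_3\hat{G}^0\le\frac{1}{8L}\bigl(\sigma_f^2+\sigma^2/n\bigr)$, which is independent of $n$.
\end{itemize}
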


\section{Numerical Experiments}


In this section, we evaluate the empirical performance of the proposed Biased-DMT algorithm. The experiments are designed to verify our theoretical findings, particularly focusing on the algorithm's robustness to data heterogeneity and its resilience against biased gradient estimators.

\subsection{Experimental Setup}
\textbf{Dataset and Problem Formulation.} Following standard decentralized optimization benchmarks \cite{c9}, \cite{c10}, we consider a nonconvex binary classification problem using the \texttt{a9a} dataset from the LIBSVM library. The objective is to minimize a logistic regression loss regularized by a nonconvex term. The local objective function at agent $i$ is formulated as
\begin{equation}
    f_i(\mathbf{x}) = \frac{1}{m_i} \sum_{j=1}^{m_i} \log\big(1 + \exp(-y_{i,j} \mathbf{a}_{i,j}^\top \mathbf{x})\big) + \alpha \sum_{k=1}^d \frac{x_k^2}{1 + x_k^2},
\end{equation}
where $\mathbf{a}_{i,j} \in \mathbb{R}^d$ is the feature vector, $y_{i,j} \in \{-1, 1\}$ is the corresponding label, $m_i$ is the number of local samples, and the penalty parameter is set to $\alpha = 0.01$ to ensure nonconvexity.

\textbf{Network and Data Heterogeneity.} We simulate a decentralized network of $n = 20$ agents communicating over a ring topology, which naturally provides a doubly stochastic mixing matrix $W$. To construct a strictly heterogeneous (Non-IID) scenario and maximize the variance $\zeta^2$, the training samples are sorted by labels and sequentially partitioned among the agents.

\textbf{Biased Oracle and Settings.} To simulate the biased gradient oracle, we inject a systematic error into the local stochastic gradient, formulated as $\tilde{g}_i(\mathbf{x}) = \nabla f_i(\mathbf{x}; \xi_i) + \mathbf{e}_i$. Specifically, the bias vector $\mathbf{e}_i$ is drawn from a Gaussian distribution with a non-zero mean, i.e., $\mathbf{e}_i \sim \mathcal{N}(\boldsymbol{\mu}, \sigma_e^2 \mathbf{I})$. This construction effectively captures the inherent systematic gradient bias ($\sigma_f^2 > 0$) specified in Assumption \ref{ass:oracle}. All algorithms utilize a mini-batch size of $b=64$. The optimal step sizes $\eta$ and momentum coefficients $\lambda$ are carefully fine-tuned via grid search to ensure a fair comparison.

\begin{figure}[htbp]
    \centering
    \includegraphics[width=0.95\linewidth]{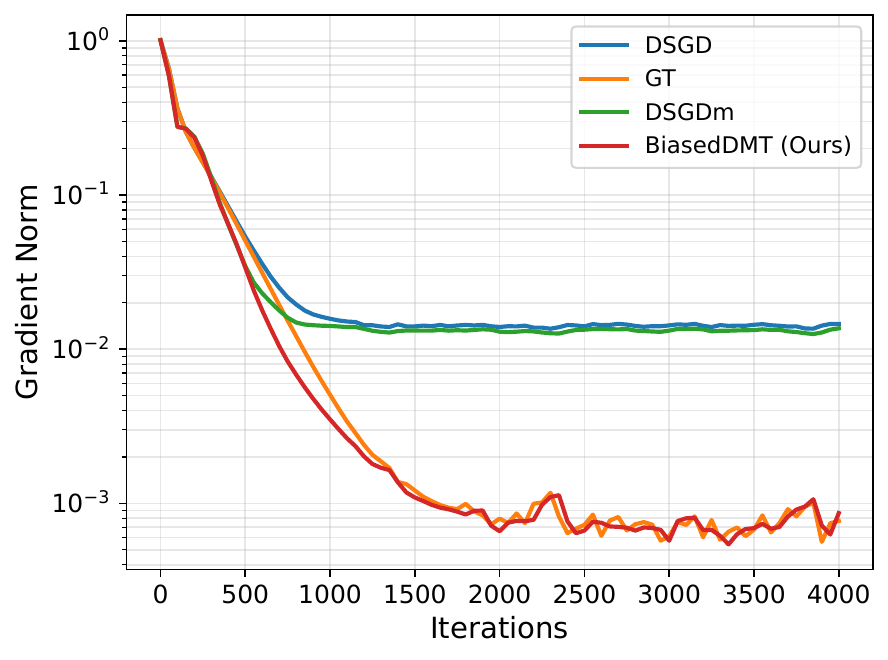}
    \caption{Convergence comparison under biased gradient oracle and Non-IID data.}
    \label{fig:comparison}
\end{figure}

\subsection{Performance under Biased Oracles}
Fig. \ref{fig:comparison} compares \textbf{Biased-DMT} against Biased-DSGD \cite{c18}, DSGDm \cite{c8}, and GT-DSGD \cite{c6} under identical biased settings.

DSGD and DSGDm fail to converge tightly, since they remain highly vulnerable to data heterogeneity ($\zeta^2$) in the Non-IID partition. Conversely, GT-DSGD mitigates heterogeneity but exhibits a slower, less smooth convergence trajectory without momentum acceleration. While Biased-DSGD theoretically handles gradient bias, its empirical steady-state error floor remains fundamentally bounded by $\zeta^2$.

In stark contrast, \textbf{Biased-DMT} consistently achieves the steepest descent rate and lowest training loss. This empirically corroborates our theoretical claim (Corollary 1): momentum tracking effectively nullifies the impact of data heterogeneity. Under the absolute bias setting, Biased-DMT completely eradicates the $\zeta^2$ error, leaving an optimal error floor dependent solely on the inherent bias $\sigma_f^2$.

\begin{figure}[htbp]
    \centering
    \includegraphics[width=0.95\linewidth]{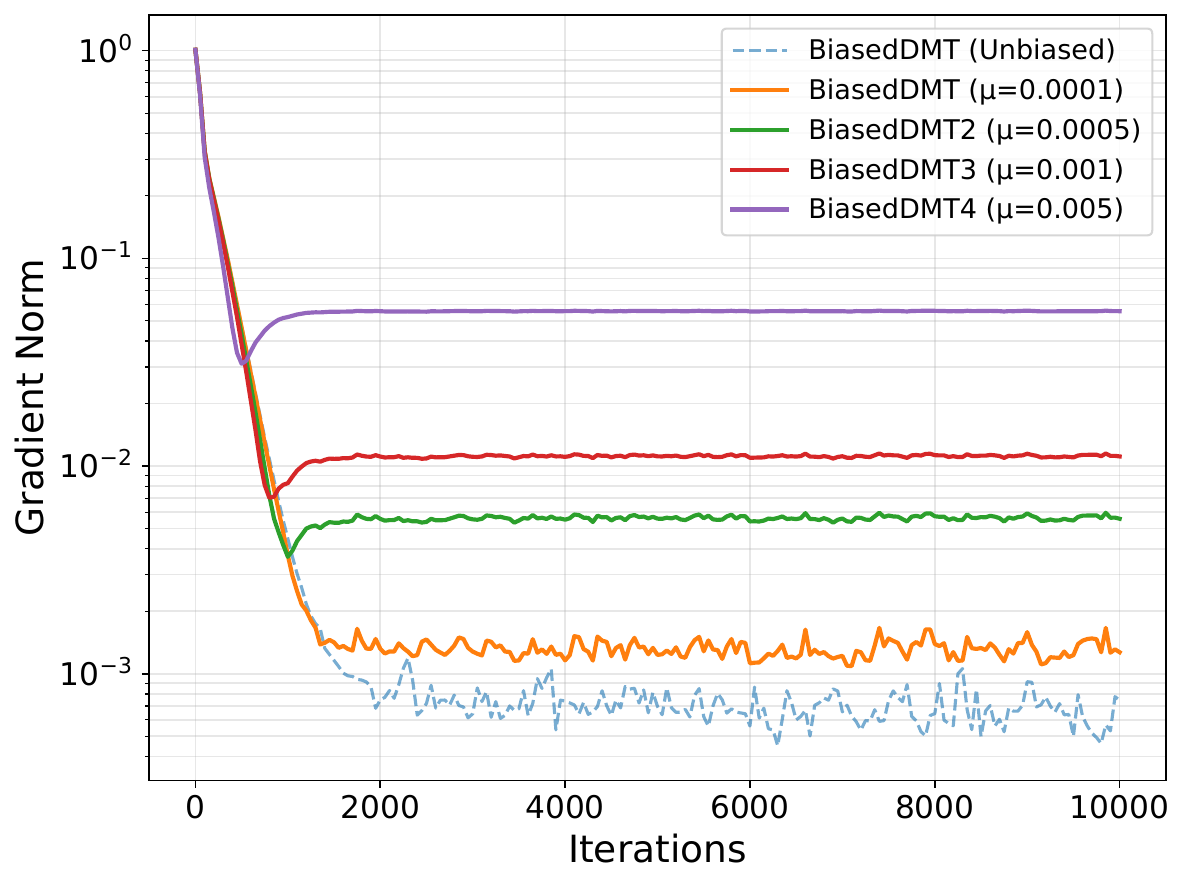}
    \caption{Convergence of Biased-DMT under varying gradient bias magnitudes.}
    \label{fig:biased_vs_unbiased}
\end{figure}

\subsection{Impact of Gradient Bias Magnitude}
Fig. \ref{fig:biased_vs_unbiased} isolates the impact of the biased oracle by comparing Biased-DMT under unbiased and increasingly biased settings.

During the initial transient phase, all variants exhibit remarkably similar rapid convergence. Biased-DMT maintains robust momentum acceleration; high bias even induces a temporary directional drift (overshooting) that accelerates escape from initial complex regions, demonstrating strong algorithmic resilience.

Approaching the steady state, the impact of bias emerges. Biased variants do not reach the unbiased baseline but stall at specific convergence floors, exhibiting a clear hierarchical degradation: larger bias magnitudes yield higher steady-state errors.This stair-step phenomenon perfectly aligns with Theorem 1 and Corollary 1. Under absolute bias, the asymptotic error bound strictly simplifies to $\mathcal{O}(\sigma_f^2)$, entirely free of data heterogeneity contamination. This strict correlation directly validates that our framework accurately captures the fundamental physical limits of optimization under biased oracles.

\section{Conclusion}
In this paper, we propose Biased-DMT, a decentralized momentum tracking algorithm specifically designed for decentralized stochastic optimization in the presence of biased gradient estimators. The theoretical analysis reveals that the proposed method effectively mitigates the adverse effects of this coupling. In particular, under absolute bias, the momentum tracking mechanism  eliminates the structural heterogeneity error, enabling convergence to the exact physical error floor without relying on bounded heterogeneity assumptions. In contrast, under relative bias, we rigorously characterize the convergence limit and show that the resulting residual error is intrinsic to decentralized learning systems with locally injected noise, rather than a consequence of algorithmic deficiency. Extensive numerical experiments corroborate the theoretical findings and demonstrate the effectiveness and robustness of Biased-DMT across heterogeneous and sparsely connected networks. Future work includes extending the framework to more general settings, such as directed communication graphs and time-varying network topologies.


\appendices
\section*{APPENDIX}

\subsection{Proof of Lemma \ref{lemma:mom_change}}

Recalling the update rule for the local momentum
\begin{equation}
    \mathbf{m}_i^{t+1} = (1-\lambda)\mathbf{m}_i^t + \lambda \tilde{g}_i(\mathbf{x}_i^{t+1})
\end{equation}
and subtracting $\mathbf{m}_i^t$ from both sides, we get
\begin{equation}
   \label{mp_m} \mathbf{m}_i^{t+1} - \mathbf{m}_i^t = \lambda (\tilde{g}_i(\mathbf{x}_i^{t+1}) - \mathbf{m}_i^t).
\end{equation}
Rewriting \eqref{mp_m}  in matrix form, we have $\mathbf{M}^{t+1} - \mathbf{M}^t = \lambda (\tilde{\mathbf{G}}(\mathbf{X}^{t+1}) - \mathbf{M}^t)$. Taking the squared Frobenius norm and expectation, we obtain
\begin{equation} \label{eq:mom_diff_start}
    \mathbb{E}[\|\mathbf{M}^{t+1} - \mathbf{M}^t\|_F^2] = \lambda^2 \mathbb{E}[\|\tilde{\mathbf{G}}(\mathbf{X}^{t+1}) - \mathbf{M}^t\|_F^2].
\end{equation}
To bound the term on the right-hand side, we decompose the error by introducing the true gradients at steps $t+1$ and $t$, i.e.,
\begin{align}
    \tilde{\mathbf{G}}(\mathbf{X}^{t+1}) - \mathbf{M}^t &= \underbrace{(\tilde{\mathbf{G}}(\mathbf{X}^{t+1}) - \nabla \mathbf{F}(\mathbf{X}^{t+1}))}_{T_1} \nonumber \\
    &\quad + \underbrace{(\nabla \mathbf{F}(\mathbf{X}^{t+1}) - \nabla \mathbf{F}(\mathbf{X}^t))}_{T_2} \nonumber \\
    &\quad + \underbrace{(\nabla \mathbf{F}(\mathbf{X}^t) - \mathbf{M}^t)}_{T_3}.
\end{align}
Using the inequality $\|A+B+C\|^2 \le 3\|A\|^2 + 3\|B\|^2 + 3\|C\|^2$, we obtain
\begin{align} \label{eq:mom_decomp}
    \mathbb{E}[\|\tilde{\mathbf{G}}(\mathbf{X}^{t+1}) - \mathbf{M}^t\|_F^2] &\le 3\mathbb{E}[\|T_1\|_F^2] + 3\mathbb{E}[\|T_2\|_F^2] \nonumber \\
    &\quad + 3\mathbb{E}[\|T_3\|_F^2].
\end{align}
Next, we bound each term individually.

\textbf{Bounding $T_1$ (Oracle Error).} Using the property $\mathbb{E}[\|\mathbf{X} - \mathbf{Y}\|^2] = \mathbb{E}[\|\mathbf{X} - \mathbb{E}[\mathbf{X}]\|^2] + \|\mathbb{E}[\mathbf{X}] - \mathbf{Y}\|^2$, we split the oracle error into variance and bias components. Applying Assumption 3, we get
\begin{align} \label{eq:bound_T1}
    \mathbb{E}[\|T_1\|_F^2] &\le \sum_{i=1}^n \mathbb{E} \left[ \sigma^2 + \|\mathbb{E}[\tilde{g}_i(\mathbf{x}_i^{t+1}) \mid \mathbf{x}_i^{t+1}] - \nabla f_i(\mathbf{x}_i^{t+1})\|^2 \right] \nonumber \\
    &\le \sum_{i=1}^n \mathbb{E} \left[ \sigma^2 + M_f \|\nabla f_i(\mathbf{x}_i^{t+1})\|^2 + \sigma_f^2 \right] \nonumber \\
    &= n(\sigma^2 + \sigma_f^2) + M_f \mathbb{E}[\|\nabla \mathbf{F}(\mathbf{X}^{t+1})\|_F^2].
\end{align}

\textbf{Bounding $T_2$ (Gradient Variation).} Using Assumption 1 ($L$-smoothness), we have
\begin{align} \label{eq:bound_T2}
    \mathbb{E}[\|T_2\|_F^2] &= \sum_{i=1}^n \mathbb{E}[\|\nabla f_i(\mathbf{x}_i^{t+1}) - \nabla f_i(\mathbf{x}_i^t)\|^2] \nonumber \\
    &\le L^2 \mathbb{E}[\|\mathbf{X}^{t+1} - \mathbf{X}^t\|_F^2].
\end{align}

\textbf{Bounding $T_3$ (Estimation Error).} By the definition of the momentum estimation error $\mathcal{G}^t$, we know
\begin{equation} \label{eq:bound_T3}
    \mathbb{E}[\|T_3\|_F^2] = \mathbb{E}[\|\nabla \mathbf{F}(\mathbf{X}^t) - \mathbf{M}^t\|_F^2] = \mathcal{G}^t.
\end{equation}

\textbf{Handling $\|\nabla \mathbf{F}(\mathbf{X}^{t+1})\|_F^2$.} To remove the dependency on $t+1$, we use the inequality $\|\mathbf{u}\|^2 \le 2\|\mathbf{v}\|^2 + 2\|\mathbf{u}-\mathbf{v}\|^2$ and $L$-smoothness, and take the expectation on both sides
\begin{align} \label{eq:grad_norm_handle}
    &\mathbb{E}[\|\nabla \mathbf{F}(\mathbf{X}^{t+1})\|_F^2]\nonumber \\
    \le &2\mathbb{E}[\|\nabla \mathbf{F}(\mathbf{X}^t)\|_F^2] 
    + 2\mathbb{E}[\|\nabla \mathbf{F}(\mathbf{X}^{t+1}) - \nabla \mathbf{F}(\mathbf{X}^t)\|_F^2] \nonumber \\
    \le &2\mathbb{E}[\|\nabla \mathbf{F}(\mathbf{X}^t)\|_F^2] + 2L^2 \mathbb{E}[\|\mathbf{X}^{t+1} - \mathbf{X}^t\|_F^2].
\end{align}

Finally, substituting (\ref{eq:bound_T1}), (\ref{eq:bound_T2}), (\ref{eq:bound_T3}), and (\ref{eq:grad_norm_handle}) back into (\ref{eq:mom_decomp}) and then into (\ref{eq:mom_diff_start}), we have
\begin{align}
    &\mathbb{E}[\|\mathbf{M}^{t+1} - \mathbf{M}^t\|_F^2] \nonumber \\
    \le& 3\lambda^2 L^2 \mathbb{E}[\|\mathbf{X}^{t+1} - \mathbf{X}^t\|_F^2] + 3\lambda^2 \mathcal{G}^t + 3\lambda^2 n(\sigma^2 + \sigma_f^2) \nonumber \\
    &+ 3\lambda^2 M_f \big( 2\mathbb{E}[\|\nabla \mathbf{F}(\mathbf{X}^t)\|_F^2] + 2L^2 \mathbb{E}[\|\mathbf{X}^{t+1} - \mathbf{X}^t\|_F^2] \big).
\end{align}
Rearranging the coefficients for $\mathbb{E}[\|\mathbf{X}^{t+1} - \mathbf{X}^t\|_F^2]$ completes the proof. \hfill $\square$

\subsection{Proof of Lemma \ref{lemma:tracking_error}}

Recall the update rule for the tracker $\mathbf{V}^{t+1} = \mathbf{V}^t \mathbf{W} + \mathbf{M}^{t+1} - \mathbf{M}^t$.
Multiplying both sides by the averaging matrix $\mathbf{J} = \frac{1}{n}\mathbf{1}_n\mathbf{1}_n^\top$, and noting that $\mathbf{W}\mathbf{J} = \mathbf{J}\mathbf{W} = \mathbf{J}$, we get the evolution of the average
\begin{align}
    \bar{\mathbf{V}}^{t+1} &= \mathbf{V}^{t+1}\mathbf{J} = \mathbf{V}^t \mathbf{W} \mathbf{J} + (\mathbf{M}^{t+1} - \mathbf{M}^t)\mathbf{J} \nonumber \\
    &= \bar{\mathbf{V}}^t + \bar{\mathbf{M}}^{t+1} - \bar{\mathbf{M}}^t.
\end{align}
Subtracting the average update from the tracker update, we obtain
\begin{align}
    \mathbf{V}^{t+1} - \bar{\mathbf{V}}^{t+1} &= (\mathbf{V}^t \mathbf{W} - \bar{\mathbf{V}}^t) \nonumber \\
    &\quad + (\mathbf{M}^{t+1} - \mathbf{M}^t)(\mathbf{I} - \mathbf{J}).
\end{align}
We now analyze the first term $(\mathbf{V}^t \mathbf{W} - \bar{\mathbf{V}}^t)$. Using the fact that $\bar{\mathbf{V}}^t = \mathbf{V}^t \mathbf{J}$ and the decomposition $\mathbf{W} = (\mathbf{W} - \mathbf{J}) + \mathbf{J}$, we have
\begin{align}
    \mathbf{V}^t \mathbf{W} - \bar{\mathbf{V}}^t &= \mathbf{V}^t \mathbf{W} - \mathbf{V}^t \mathbf{J} = \mathbf{V}^t (\mathbf{W} - \mathbf{J}) \nonumber \\
    &= (\mathbf{V}^t - \bar{\mathbf{V}}^t + \bar{\mathbf{V}}^t)(\mathbf{W} - \mathbf{J}) \nonumber \\
    &= (\mathbf{V}^t - \bar{\mathbf{V}}^t)(\mathbf{W} - \mathbf{J}) + \bar{\mathbf{V}}^t(\mathbf{W} - \mathbf{J}).
\end{align}
Since $(\mathbf{W} - \mathbf{J})$ has zero column sums (i.e., $\mathbf{1}_n^\top(\mathbf{W} - \mathbf{J}) = \mathbf{0}$), the term $\bar{\mathbf{V}}^t(\mathbf{W} - \mathbf{J}) = \mathbf{0}$. Thus, we get
\begin{align} \label{eq:spectral_step}
    \|\mathbf{V}^t \mathbf{W} - \bar{\mathbf{V}}^t\|_F &= \|(\mathbf{V}^t - \bar{\mathbf{V}}^t)(\mathbf{W} - \mathbf{J})\|_F \nonumber \\
    &\le \|\mathbf{W} - \mathbf{J}\|_2 \|\mathbf{V}^t - \bar{\mathbf{V}}^t\|_F.
\end{align}
By Assumption 2 (Spectral Gap), $\|\mathbf{W} - \mathbf{J}\|_2 = \rho(\mathbf{W} - \mathbf{J}) = 1 - \rho$. It implies
\begin{equation}
    \|\mathbf{V}^t \mathbf{W} - \bar{\mathbf{V}}^t\|_F^2 \le (1-\rho)^2 \|\mathbf{V}^t - \bar{\mathbf{V}}^t\|_F^2.
\end{equation}

Now, applying Young's Inequality $\|X+Y\|_F^2 \le (1+\beta)\|X\|_F^2 + (1+\frac{1}{\beta})\|Y\|_F^2$ with $\beta = \frac{\rho}{1-\rho}$ (assuming $\rho \in (0, 1)$; the case $\rho=1$ holds trivially as $\mathbf{W}=\mathbf{J}$ and errors drop to zero), we obtain the coefficients $1+\beta = \frac{1}{1-\rho}$ and $1+\frac{1}{\beta} = \frac{1}{\rho}$. Then, we have
\begin{align}
    &\|\mathbf{V}^{t+1} - \bar{\mathbf{V}}^{t+1}\|_F^2 \nonumber \\
    \le& \frac{1}{1-\rho} \|\mathbf{V}^t \mathbf{W} - \bar{\mathbf{V}}^t\|_F^2 
    + \frac{1}{\rho} \|(\mathbf{I} - \mathbf{J})(\mathbf{M}^{t+1} - \mathbf{M}^t)\|_F^2 \nonumber \\
    \le& \frac{1}{1-\rho}(1-\rho)^2 \|\mathbf{V}^t - \bar{\mathbf{V}}^t\|_F^2  + \frac{1}{\rho} \|\mathbf{M}^{t+1} - \mathbf{M}^t\|_F^2 \nonumber \\
    =& (1-\rho) \|\mathbf{V}^t - \bar{\mathbf{V}}^t\|_F^2 + \frac{1}{\rho} \|\mathbf{M}^{t+1} - \mathbf{M}^t\|_F^2.
\end{align}
Here we used the contraction property $\|\mathbf{I}-\mathbf{J}\|_2 \le 1$. Taking expectations yields the result
\begin{equation}
    \Xi_v^{t+1} \le (1 - \rho) \Xi_v^t + \frac{1}{\rho} \mathbb{E}[\|\mathbf{M}^{t+1} - \mathbf{M}^t\|_F^2].
\end{equation}
\hfill $\square$

\subsection{Proof of Lemma \ref{lemma:consensus_error}}

Recall the update rule for the model parameters $\mathbf{X}^{t+1} = \mathbf{X}^t \mathbf{W} - \eta \mathbf{V}^t$.
Multiplying both sides by the averaging matrix $\mathbf{J} = \frac{1}{n}\mathbf{1}_n\mathbf{1}_n^\top$, and using the property $\mathbf{W}\mathbf{J} = \mathbf{J}$, we obtain the update rule for the average variable, i.e.,
\begin{align}
    \bar{\mathbf{X}}^{t+1} &= \mathbf{X}^{t+1}\mathbf{J} = \mathbf{X}^t \mathbf{W} \mathbf{J} - \eta \mathbf{V}^t \mathbf{J} \nonumber \\
    &= \bar{\mathbf{X}}^t - \eta \bar{\mathbf{V}}^t.
\end{align}
Subtracting the average update from the parameter update yields the deviation
\begin{align}
    \mathbf{X}^{t+1} - \bar{\mathbf{X}}^{t+1} &= (\mathbf{X}^t \mathbf{W} - \eta \mathbf{V}^t) - (\bar{\mathbf{X}}^t - \eta \bar{\mathbf{V}}^t) \nonumber \\
    &= (\mathbf{X}^t \mathbf{W} - \bar{\mathbf{X}}^t) - \eta (\mathbf{V}^t - \bar{\mathbf{V}}^t).
\end{align}
Similar to the analysis in Lemma~\ref{lemma:tracking_error}, we decompose the first term using $\bar{\mathbf{X}}^t = \mathbf{X}^t \mathbf{J}$ and $\mathbf{W} = (\mathbf{W} - \mathbf{J}) + \mathbf{J}$, i.e.,
\begin{align}
    \mathbf{X}^t \mathbf{W} - \bar{\mathbf{X}}^t &= \mathbf{X}^t (\mathbf{W} - \mathbf{J} + \mathbf{J}) - \mathbf{X}^t \mathbf{J} \nonumber \\
    &= \mathbf{X}^t (\mathbf{W} - \mathbf{J}) + \bar{\mathbf{X}}^t - \bar{\mathbf{X}}^t \nonumber \\
    &= (\mathbf{X}^t - \bar{\mathbf{X}}^t + \bar{\mathbf{X}}^t)(\mathbf{W} - \mathbf{J}).
\end{align}
Since $\bar{\mathbf{X}}^t$ has identical columns, $\bar{\mathbf{X}}^t(\mathbf{W} - \mathbf{J}) = \mathbf{0}$. Thus, the first term is simplified by
\begin{equation}
    \mathbf{X}^t \mathbf{W} - \bar{\mathbf{X}}^t = (\mathbf{X}^t - \bar{\mathbf{X}}^t)(\mathbf{W} - \mathbf{J}).
\end{equation}
Taking the squared Frobenius norm and applying Assumption 2 (Spectral Gap, $\|\mathbf{W} - \mathbf{J}\|_2 = 1 - \rho$), we have
\begin{align}
    \|\mathbf{X}^t \mathbf{W} - \bar{\mathbf{X}}^t\|_F^2 &\le \|\mathbf{W} - \mathbf{J}\|_2^2 \|\mathbf{X}^t - \bar{\mathbf{X}}^t\|_F^2 \nonumber \\
    &= (1 - \rho)^2 \|\mathbf{X}^t - \bar{\mathbf{X}}^t\|_F^2.
\end{align}
Now, using Young's Inequality $\|A+B\|_F^2 \le (1+\beta)\|A\|_F^2 + (1+\frac{1}{\beta})\|B\|_F^2$ to the update equation and setting $\beta = \frac{\rho}{1-\rho}$, we get
\begin{align}
    &\|\mathbf{X}^{t+1} - \bar{\mathbf{X}}^{t+1}\|_F^2 \nonumber \\
    \le& \frac{1}{1-\rho} \|\mathbf{X}^t \mathbf{W} - \bar{\mathbf{X}}^t\|_F^2  + \frac{1}{\rho} \|-\eta (\mathbf{V}^t - \bar{\mathbf{V}}^t)\|_F^2 \nonumber \\
    \le& \frac{1}{1-\rho}(1-\rho)^2 \|\mathbf{X}^t - \bar{\mathbf{X}}^t\|_F^2  + \frac{\eta^2}{\rho} \|\mathbf{V}^t - \bar{\mathbf{V}}^t\|_F^2 \nonumber \\
    =& (1-\rho) \|\mathbf{X}^t - \bar{\mathbf{X}}^t\|_F^2 + \frac{\eta^2}{\rho} \|\mathbf{V}^t - \bar{\mathbf{V}}^t\|_F^2.
\end{align}
Taking expectations on both sides, we obtain the final recursion
\begin{equation}
    \Xi_x^{t+1} \le (1 - \rho) \Xi_x^t + \frac{\eta^2}{\rho} \Xi_v^t.
\end{equation}
\hfill $\square$

\subsection{Proof of Lemma \ref{lemma:avg_mom_error}}

We define the error vector $\Delta^{t+1}_1 = \nabla F(\mathbf{X}^{t+1})\mathbf{1} - \mathbf{M}^{t+1}\mathbf{1}$. 
To tightly bound this term and properly capture the variance reduction effect of the momentum tracking, we must mathematically decouple the zero-mean variance noise and the systematic bias \emph{before} applying any inequality relaxations. 

Conditioned on the current model parameters $\mathbf{X}^{t+1}$, we define the zero-mean random noise vector $\boldsymbol{\xi}_i$ and the deterministic bias vector $\mathbf{b}_i$ for each agent $i$ as
\begin{align*}
    \boldsymbol{\xi}_i &= \tilde{g}_i(\mathbf{x}_i^{t+1}) - \mathbb{E}[\tilde{g}_i(\mathbf{x}_i^{t+1})], \\
    \mathbf{b}_i &= \mathbb{E}[\tilde{g}_i(\mathbf{x}_i^{t+1})] - \nabla f_i(\mathbf{x}_i^{t+1}).
\end{align*}
By Assumption 3, we have $\mathbb{E}[\boldsymbol{\xi}_i]=\mathbf{0}$, $\mathbb{E}[\|\boldsymbol{\xi}_i\|^2] \le \sigma^2$, and $\|\mathbf{b}_i\|^2 \le M_f \|\nabla f_i(\mathbf{x}_i^{t+1})\|^2 + \sigma_f^2$. 
Let $\boldsymbol{\xi} = \sum_{i=1}^n \boldsymbol{\xi}_i$ and $\mathbf{b} = \sum_{i=1}^n \mathbf{b}_i$, the biased gradient oracle can be written as $\tilde{\mathbf{G}}(\mathbf{X}^{t+1})\mathbf{1} = \nabla F(\mathbf{X}^{t+1})\mathbf{1} + \mathbf{b} + \boldsymbol{\xi}$.

Substituting this relation and the momentum update rule $\mathbf{M}^{t+1} = (1-\lambda)\mathbf{M}^t + \lambda \tilde{\mathbf{G}}(\mathbf{X}^{t+1})$ into $\Delta^{t+1}_1$, we can decompose the error into four components
\begin{align} \label{eq:delta_decomp}
    \Delta^{t+1}_1 &= \nabla F(\mathbf{X}^{t+1})\mathbf{1} \nonumber- \Big( (1-\lambda)\mathbf{M}^t\mathbf{1} \nonumber \\
    &\qquad + \lambda (\nabla F(\mathbf{X}^{t+1})\mathbf{1} + \mathbf{b} + \boldsymbol{\xi}) \Big) \nonumber \\
    &= (1-\lambda) \underbrace{(\nabla F(\mathbf{X}^t)\mathbf{1} - \mathbf{M}^t\mathbf{1})}_{A} \nonumber \\
    &\quad + (1-\lambda) \underbrace{(\nabla F(\mathbf{X}^{t+1})\mathbf{1} - \nabla F(\mathbf{X}^t)\mathbf{1})}_{B} \nonumber \\
    &\quad - \lambda \mathbf{b} - \lambda \boldsymbol{\xi}.
\end{align}

Taking the squared Euclidean norm and expectation, since $\boldsymbol{\xi}$ is a zero-mean random noise conditioned on $\mathbf{X}^{t+1}$, its cross-terms with the deterministic components ($A$, $B$, and $\mathbf{b}$) strictly vanish (i.e., $\mathbb{E}[\langle (1-\lambda)A + (1-\lambda)B - \lambda \mathbf{b}, \boldsymbol{\xi} \rangle] = 0$). Therefore, the stochastic noise is perfectly decoupled, yielding
\begin{align} \label{eq:G_decomp}
    \mathbb{E}[\|\Delta^{t+1}_1\|^2] &= \mathbb{E}[\| (1-\lambda)A + (1-\lambda)B - \lambda \mathbf{b} \|^2] \nonumber \\
    &\quad + \lambda^2 \mathbb{E}[\|\boldsymbol{\xi}\|^2].
\end{align}

Next, we bound the deterministic part using Young's Inequality $\|X+Y\|^2 \le (1+\alpha)\|X\|^2 + (1+\frac{1}{\alpha})\|Y\|^2$ with $\alpha = \frac{\lambda}{1-\lambda}$
\begin{align} \label{eq:deterministic_part}
    &\mathbb{E}[\| (1-\lambda)A + ((1-\lambda)B - \lambda \mathbf{b}) \|^2] \nonumber \\
    &\le \frac{1}{1-\lambda} (1-\lambda)^2 \mathbb{E}[\|A\|^2] + \frac{1}{\lambda} \mathbb{E}[\|(1-\lambda)B - \lambda \mathbf{b}\|^2] \nonumber \\
    &\le (1-\lambda)\hat{G}^t + \frac{2}{\lambda} (1-\lambda)^2 \mathbb{E}[\|B\|^2] + \frac{2}{\lambda} \lambda^2 \mathbb{E}[\|\mathbf{b}\|^2] \nonumber \\
    &\le (1-\lambda)\hat{G}^t + \frac{2}{\lambda} \mathbb{E}[\|B\|^2] + 2\lambda \mathbb{E}[\|\mathbf{b}\|^2],
\end{align}
where we use the inequality $\|X-Y\|^2 \le 2\|X\|^2 + 2\|Y\|^2$ and $(1-\lambda)^2 \le 1$ in the last two steps. Substituting \eqref{eq:deterministic_part} back into \eqref{eq:G_decomp}, we obtain the consolidated intermediate bound
\begin{align} \label{eq:G_decomp_combined}
    \mathbb{E}[\|\Delta^{t+1}_1\|^2] &\le (1-\lambda)\hat{G}^t + \frac{2}{\lambda} \mathbb{E}[\|B\|^2] \nonumber \\
    &\quad + 2\lambda \mathbb{E}[\|\mathbf{b}\|^2] + \lambda^2 \mathbb{E}[\|\boldsymbol{\xi}\|^2].
\end{align}

Now, we bound terms $B$, $\mathbf{b}$, and $\boldsymbol{\xi}$ individually.

\textbf{Bounding the Drift Term ($B$).}
Using Jensen's inequality $\|\sum_{i=1}^n \mathbf{z}_i\|^2 \le n \sum_{i=1}^n \|\mathbf{z}_i\|^2$ and the $L$-smoothness Assumption $\|\nabla f_i(\mathbf{x}) - \nabla f_i(\mathbf{y})\| \le L\|\mathbf{x}-\mathbf{y}\|$, we have
\begin{align} \label{eq:bound_B}
    \mathbb{E}[\|B\|^2] &= \mathbb{E}\left[\left\| \sum_{i=1}^n (\nabla f_i(\mathbf{x}_i^{t+1}) - \nabla f_i(\mathbf{x}_i^{t})) \right\|^2\right] \nonumber \\
    &\le n \mathbb{E}\left[\sum_{i=1}^n \|\nabla f_i(\mathbf{x}_i^{t+1}) - \nabla f_i(\mathbf{x}_i^t)\|^2\right] \nonumber \\
    &\le n L^2 \mathbb{E}\left[\sum_{i=1}^n \|\mathbf{x}_i^{t+1} - \mathbf{x}_i^t\|^2\right] \nonumber \\
    &= n L^2 \mathbb{E}[\|\mathbf{X}^{t+1} - \mathbf{X}^t\|_F^2].
\end{align}

\textbf{Bounding the Bias Term ($\mathbf{b}$).}
Using Jensen's inequality and Assumption 3, the squared norm of the aggregate bias is bounded by
\begin{align} \label{eq:bound_bias}
    \mathbb{E}[\|\mathbf{b}\|^2] &= \mathbb{E}\left[ \left\| \sum_{i=1}^n \mathbf{b}_i \right\|^2 \right] \le n \sum_{i=1}^n \mathbb{E}[\|\mathbf{b}_i\|^2] \nonumber \\
    &\le n \sum_{i=1}^n \mathbb{E}\left[ M_f \|\nabla f_i(\mathbf{x}_i^{t+1})\|^2 + \sigma_f^2 \right] \nonumber \\
    &= n^2\sigma_f^2 + n M_f \mathbb{E}[\|\nabla \mathbf{F}(\mathbf{X}^{t+1})\|_F^2].
\end{align}

\textbf{Bounding the Pure Noise Term ($\boldsymbol{\xi}$).}
Since the local stochastic samplings are independent across agents and $\boldsymbol{\xi}_i$ is zero-mean, all cross-terms vanish (i.e., $\mathbb{E}[\langle \boldsymbol{\xi}_i, \boldsymbol{\xi}_j \rangle] = 0$ for $i \neq j$). Therefore
\begin{align} \label{eq:bound_noise}
    \mathbb{E}[\|\boldsymbol{\xi}\|^2] &= \mathbb{E}\left[ \left\| \sum_{i=1}^n \boldsymbol{\xi}_i \right\|^2 \right] \nonumber \\
    &= \sum_{i=1}^n \mathbb{E}[\|\boldsymbol{\xi}_i\|^2] \le n\sigma^2.
\end{align}

\textbf{Final Combination.}
Substituting the bounds \eqref{eq:bound_B}, \eqref{eq:bound_bias}, and \eqref{eq:bound_noise} back into \eqref{eq:G_decomp_combined}, we have
\begin{align}
    \hat{G}^{t+1} &\le (1-\lambda)\hat{G}^t + \frac{2nL^2}{\lambda} \mathbb{E}[\|\mathbf{X}^{t+1} - \mathbf{X}^t\|_F^2] \nonumber \\
    &\quad + 2\lambda \Big( n^2\sigma_f^2 + n M_f \mathbb{E}[\|\nabla \mathbf{F}(\mathbf{X}^{t+1})\|_F^2] \Big) \nonumber \\
    &\quad + \lambda^2 n \sigma^2.
\end{align}
Further substituting the relation \eqref{eq:grad_norm_handle} (which bounds $\mathbb{E}[\|\nabla \mathbf{F}(\mathbf{X}^{t+1})\|_F^2]$) to handle the unknown gradient, we obtain
\begin{align}
    \hat{G}^{t+1} &\le (1-\lambda)\hat{G}^t + \frac{2nL^2}{\lambda} \mathbb{E}[\|\mathbf{X}^{t+1} - \mathbf{X}^t\|_F^2] \nonumber \\
    &\quad + 2\lambda n M_f \Big( 2\mathbb{E}[\|\nabla \mathbf{F}(\mathbf{X}^t)\|_F^2] \nonumber \\
    &\qquad\qquad\qquad + 2L^2 \mathbb{E}[\|\mathbf{X}^{t+1} - \mathbf{X}^t\|_F^2] \Big) \nonumber \\
    &\quad + 2\lambda n^2\sigma_f^2 + \lambda^2 n \sigma^2.
\end{align}
Regrouping the coefficients for $\mathbb{E}[\|\mathbf{X}^{t+1} - \mathbf{X}^t\|_F^2]$ and $\mathbb{E}[\|\nabla \mathbf{F}(\mathbf{X}^t)\|_F^2]$ yields the desired result
\begin{align}
    \hat{G}^{t+1} &\le (1-\lambda)\hat{G}^t \nonumber \\
    &\quad + \left( \frac{2n}{\lambda} + 4\lambda n M_f \right) L^2 \mathbb{E}[\|\mathbf{X}^{t+1} - \mathbf{X}^t\|_F^2] \nonumber \\
    &\quad + 4\lambda n M_f \mathbb{E}[\|\nabla \mathbf{F}(\mathbf{X}^t)\|_F^2] \nonumber \\
    &\quad + 2\lambda n^2 \sigma_f^2 + \lambda^2 n \sigma^2.
\end{align}
\hfill $\square$

\subsection{Proof of Lemma \ref{lemma:mom_est_error}}

We define the error matrix $\mathbf{\Delta}^{t+1}_2 = \mathbf{M}^{t+1} - \nabla \mathbf{F}(\mathbf{X}^{t+1})$. 
Similar to the proof of Lemma 4, to properly capture the variance reduction effect of the momentum tracker, we mathematically decouple the zero-mean variance noise and the systematic bias before applying any inequality relaxations.

Conditioned on the current model parameters $\mathbf{X}^{t+1}$, we define the zero-mean random noise vector $\boldsymbol{\xi}_i$ and the deterministic bias vector $\mathbf{b}_i$ for each agent $i$ as
\begin{align*}
    \boldsymbol{\xi}_i &= \tilde{g}_i(\mathbf{x}_i^{t+1}) - \mathbb{E}[\tilde{g}_i(\mathbf{x}_i^{t+1})], \\
    \mathbf{b}_i &= \mathbb{E}[\tilde{g}_i(\mathbf{x}_i^{t+1})] - \nabla f_i(\mathbf{x}_i^{t+1}).
\end{align*}
Let $\boldsymbol{\xi} = [\boldsymbol{\xi}_1, \dots, \boldsymbol{\xi}_n]$ and $\mathbf{b} = [\mathbf{b}_1, \dots, \mathbf{b}_n]$ be the corresponding error matrices. The biased gradient oracle can be rewritten as $\tilde{\mathbf{G}}(\mathbf{X}^{t+1}) = \nabla \mathbf{F}(\mathbf{X}^{t+1}) + \mathbf{b} + \boldsymbol{\xi}$.

Substituting this relation and the momentum update rule $\mathbf{M}^{t+1} = (1-\lambda)\mathbf{M}^t + \lambda \tilde{\mathbf{G}}(\mathbf{X}^{t+1})$ into $\mathbf{\Delta}^{t+1}_2$, we can decompose the error matrix into four components
\begin{align} \label{eq:delta2_decomp}
    \mathbf{\Delta}^{t+1}_2 &= (1-\lambda)\mathbf{M}^t + \lambda (\nabla \mathbf{F}(\mathbf{X}^{t+1}) + \mathbf{b} + \boldsymbol{\xi}) \nonumber \\
    &\quad - \nabla \mathbf{F}(\mathbf{X}^{t+1}) \nonumber \\
    &= (1-\lambda) \underbrace{(\mathbf{M}^t - \nabla \mathbf{F}(\mathbf{X}^t))}_{A} \nonumber \\
    &\quad + (1-\lambda) \underbrace{(\nabla \mathbf{F}(\mathbf{X}^t) - \nabla \mathbf{F}(\mathbf{X}^{t+1}))}_{B} \nonumber \\
    &\quad + \lambda \mathbf{b} + \lambda \boldsymbol{\xi}.
\end{align}

Taking the squared Frobenius norm and expectation, since $\boldsymbol{\xi}$ is a zero-mean random noise matrix conditioned on $\mathbf{X}^{t+1}$, its cross-terms with the deterministic components ($A$, $B$, and $\mathbf{b}$) strictly vanish. Therefore, the stochastic noise is decoupled
\begin{align} \label{eq:calG_decomp}
    \mathbb{E}[\|\mathbf{\Delta}^{t+1}_2\|_F^2] &= \mathbb{E}[\| (1-\lambda)A + (1-\lambda)B + \lambda \mathbf{b} \|_F^2] \nonumber \\
    &\quad + \lambda^2 \mathbb{E}[\|\boldsymbol{\xi}\|_F^2].
\end{align}

Next, we bound the deterministic part using Young's Inequality $\|X+Y\|_F^2 \le (1+\alpha)\|X\|_F^2 + (1+\frac{1}{\alpha})\|Y\|_F^2$ with $\alpha = \frac{\lambda}{1-\lambda}$
\begin{align} \label{eq:deterministic_part_calG}
    &\mathbb{E}[\| (1-\lambda)A + ((1-\lambda)B + \lambda \mathbf{b}) \|_F^2] \nonumber \\
    &\le \frac{1}{1-\lambda} (1-\lambda)^2 \mathbb{E}[\|A\|_F^2] + \frac{1}{\lambda} \mathbb{E}[\|(1-\lambda)B + \lambda \mathbf{b}\|_F^2] \nonumber \\
    &\le (1-\lambda)\mathcal{G}^t + \frac{2}{\lambda} (1-\lambda)^2 \mathbb{E}[\|B\|_F^2] + \frac{2}{\lambda} \lambda^2 \mathbb{E}[\|\mathbf{b}\|_F^2] \nonumber \\
    &\le (1-\lambda)\mathcal{G}^t + \frac{2}{\lambda} \mathbb{E}[\|B\|_F^2] + 2\lambda \mathbb{E}[\|\mathbf{b}\|_F^2].
\end{align}
Substituting \eqref{eq:deterministic_part_calG} back into \eqref{eq:calG_decomp}, we obtain the consolidated intermediate bound
\begin{align} \label{eq:calG_decomp_combined}
    \mathbb{E}[\|\mathbf{\Delta}^{t+1}_2\|_F^2] &\le (1-\lambda)\mathcal{G}^t + \frac{2}{\lambda} \mathbb{E}[\|B\|_F^2] \nonumber \\
    &\quad + 2\lambda \mathbb{E}[\|\mathbf{b}\|_F^2] + \lambda^2 \mathbb{E}[\|\boldsymbol{\xi}\|_F^2].
\end{align}

Now, we bound terms $B$, $\mathbf{b}$, and $\boldsymbol{\xi}$ individually.

\textbf{Bounding the Drift Term ($B$).}
Using the $L$-smoothness Assumption $\|\nabla f_i(\mathbf{x}) - \nabla f_i(\mathbf{y})\| \le L\|\mathbf{x}-\mathbf{y}\|$, we have
\begin{align} \label{eq:bound_B_calG}
    \mathbb{E}[\|B\|_F^2] &= \mathbb{E}[\|\nabla \mathbf{F}(\mathbf{X}^t) - \nabla \mathbf{F}(\mathbf{X}^{t+1})\|_F^2] \nonumber \\
    &= \mathbb{E}\left[\sum_{i=1}^n \|\nabla f_i(\mathbf{x}_i^t) - \nabla f_i(\mathbf{x}_i^{t+1})\|^2\right] \nonumber \\
    &\le \mathbb{E}\left[\sum_{i=1}^n L^2 \|\mathbf{x}_i^t - \mathbf{x}_i^{t+1}\|^2\right] \nonumber \\
    &= L^2 \mathbb{E}[\|\mathbf{X}^t - \mathbf{X}^{t+1}\|_F^2].
\end{align}

\textbf{Bounding the Bias Term ($\mathbf{b}$).}
Using Assumption 3 directly to the Frobenius norm of the bias matrix, we obtain
\begin{align} \label{eq:bound_bias_calG}
    \mathbb{E}[\|\mathbf{b}\|_F^2] &= \sum_{i=1}^n \mathbb{E}[\|\mathbf{b}_i\|^2] \nonumber \\
    &\le \sum_{i=1}^n \mathbb{E}\left[ M_f \|\nabla f_i(\mathbf{x}_i^{t+1})\|^2 + \sigma_f^2 \right] \nonumber \\
    &= n\sigma_f^2 + M_f \mathbb{E}[\|\nabla \mathbf{F}(\mathbf{X}^{t+1})\|_F^2].
\end{align}

\textbf{Bounding the Pure Noise Term ($\boldsymbol{\xi}$).}
Similarly, evaluating the Frobenius norm of the zero-mean noise matrix yields
\begin{align} \label{eq:bound_noise_calG}
    \mathbb{E}[\|\boldsymbol{\xi}\|_F^2] &= \sum_{i=1}^n \mathbb{E}[\|\boldsymbol{\xi}_i\|^2] \le n\sigma^2.
\end{align}

\textbf{Final Combination.}
Substituting the bounds \eqref{eq:bound_B_calG}, \eqref{eq:bound_bias_calG}, and \eqref{eq:bound_noise_calG} back into \eqref{eq:calG_decomp_combined}, we have
\begin{align}
    \mathcal{G}^{t+1} &\le (1-\lambda)\mathcal{G}^t + \frac{2L^2}{\lambda} \mathbb{E}[\|\mathbf{X}^{t+1} - \mathbf{X}^t\|_F^2] \nonumber \\
    &\quad + 2\lambda \Big( n\sigma_f^2 + M_f \mathbb{E}[\|\nabla \mathbf{F}(\mathbf{X}^{t+1})\|_F^2] \Big) \nonumber \\
    &\quad + \lambda^2 n \sigma^2.
\end{align}
Further substituting the relation \eqref{eq:grad_norm_handle} to handle the unknown gradient norm $\mathbb{E}[\|\nabla \mathbf{F}(\mathbf{X}^{t+1})\|_F^2]$, we obtain
\begin{align}
    \mathcal{G}^{t+1} &\le (1-\lambda)\mathcal{G}^t + \frac{2L^2}{\lambda} \mathbb{E}[\|\mathbf{X}^{t+1} - \mathbf{X}^t\|_F^2] \nonumber \\
    &\quad + 2\lambda M_f \Big( 2\mathbb{E}[\|\nabla \mathbf{F}(\mathbf{X}^t)\|_F^2] \nonumber \\
    &\qquad\qquad\quad + 2L^2 \mathbb{E}[\|\mathbf{X}^{t+1} - \mathbf{X}^t\|_F^2] \Big) \nonumber \\
    &\quad + 2\lambda n \sigma_f^2 + \lambda^2 n \sigma^2.
\end{align}
Regrouping the coefficients for $\mathbb{E}[\|\mathbf{X}^{t+1} - \mathbf{X}^t\|_F^2]$ and $\mathbb{E}[\|\nabla \mathbf{F}(\mathbf{X}^t)\|_F^2]$ yields the desired result
\begin{align}
    \mathcal{G}^{t+1} &\le (1-\lambda)\mathcal{G}^t \nonumber \\
    &\quad + \left( \frac{2}{\lambda} + 4\lambda M_f \right) L^2 \mathbb{E}[\|\mathbf{X}^{t+1} - \mathbf{X}^t\|_F^2] \nonumber \\
    &\quad + 4\lambda M_f \mathbb{E}[\|\nabla \mathbf{F}(\mathbf{X}^t)\|_F^2] \nonumber \\
    &\quad + 2\lambda n \sigma_f^2 + \lambda^2 n \sigma^2.
\end{align}
\hfill $\square$

\subsection{Proof of Lemma \ref{lemma:param_diff}} \label{proof:param_diff}

Recall the update rule for the model parameters $\mathbf{X}^{t+1} = \mathbf{X}^t \mathbf{W} - \eta \mathbf{V}^t$.
Subtracting $\mathbf{X}^t$ from both sides, we can express the parameter difference as
\begin{align}\label{Xp_X}
    \mathbf{X}^{t+1} - \mathbf{X}^t &= \mathbf{X}^t \mathbf{W} - \mathbf{X}^t - \eta \mathbf{V}^t \nonumber \\
    &= \mathbf{X}^t(\mathbf{W} - \mathbf{I}_n) - \eta \mathbf{V}^t.
\end{align}
To bound \eqref{Xp_X}, we inject the average matrices $\bar{\mathbf{X}}^t$ and $\bar{\mathbf{V}}^t$. Using the property that $\bar{\mathbf{X}}^t$ has identical columns, we have $\bar{\mathbf{X}}^t(\mathbf{W} - \mathbf{I}_n) = \bar{\mathbf{X}}^t\mathbf{W} - \bar{\mathbf{X}}^t = \mathbf{0}$. Thus, the first term can be rewritten as
\begin{equation}
    \mathbf{X}^t(\mathbf{W} - \mathbf{I}_n) = (\mathbf{X}^t - \bar{\mathbf{X}}^t)(\mathbf{W} - \mathbf{I}_n).
\end{equation}
Next, we decompose the tracker variable $\mathbf{V}^t$ into its consensus error and its global average, i.e., $\mathbf{V}^t = (\mathbf{V}^t - \bar{\mathbf{V}}^t) + \bar{\mathbf{V}}^t$. Substituting the decomposition back into \eqref{Xp_X} yields
\begin{align}
    \mathbf{X}^{t+1} - \mathbf{X}^t &= (\mathbf{X}^t - \bar{\mathbf{X}}^t)(\mathbf{W} - \mathbf{I}_n) \nonumber \\
    &\quad - \eta (\mathbf{V}^t - \bar{\mathbf{V}}^t) - \eta \bar{\mathbf{V}}^t.
\end{align}
Taking the squared Frobenius norm and applying the inequality $\|A + B + C\|_F^2 \le 3\|A\|_F^2 + 3\|B\|_F^2 + 3\|C\|_F^2$, we obtain
\begin{align} \label{eq:param_diff_expansion}
    \mathbb{E}[\|\mathbf{X}^{t+1} - \mathbf{X}^t\|_F^2] &\le 3 \mathbb{E}[\|(\mathbf{X}^t - \bar{\mathbf{X}}^t)(\mathbf{W} - \mathbf{I}_n)\|_F^2] \nonumber \\
    &\quad + 3\eta^2 \mathbb{E}[\|\mathbf{V}^t - \bar{\mathbf{V}}^t\|_F^2] + 3\eta^2 \mathbb{E}[\|\bar{\mathbf{V}}^t\|_F^2].
\end{align}
For the first term, by Assumption 2, the eigenvalues of $(\mathbf{W} - \mathbf{I}_n)$ lie in $(-2, 0]$, which implies $\|\mathbf{W} - \mathbf{I}_n\|_2 \le 2$. Thus, the first term is bounded by $12\Xi_x^t$. For the third term, proper initialization ensures $\bar{\mathbf{V}}^t = \bar{\mathbf{v}}^t \mathbf{1}_n^\top = \bar{\mathbf{m}}^t \mathbf{1}_n^\top$, giving $\|\bar{\mathbf{V}}^t\|_F^2 = n \|\bar{\mathbf{m}}^t\|^2$. Therefore, we have
\begin{equation} \label{eq:param_diff_mid}
    \mathbb{E}[\|\mathbf{X}^{t+1} - \mathbf{X}^t\|_F^2] \le 12 \Xi_x^t + 3\eta^2 \Xi_v^t + 3n\eta^2 \mathbb{E}[\|\bar{\mathbf{m}}^t\|^2].
\end{equation}

This completes the proof. \hfill $\square$

\subsection{Proof of Lemma \ref{lemma:descent}} \label{proof:descent}

By the $L$-smoothness of the global objective function $F$ (Assumption 1) and the update rule for the average parameter $\bar{\mathbf{x}}^{t+1} = \bar{\mathbf{x}}^t - \eta \bar{\mathbf{v}}^t$. Because of the initialization $\mathbf{V}^0 = \mathbf{M}^0$ and the doubly stochastic property of $\mathbf{W}$, the average tracker strictly equals the average momentum, i.e., $\bar{\mathbf{v}}^t = \bar{\mathbf{m}}^t$. Thus, the average model evolves as $\bar{\mathbf{x}}^{t+1} = \bar{\mathbf{x}}^t - \eta \bar{\mathbf{m}}^t$.

Using the smoothness inequality, we have
\begin{align}
    \label{descent}    F(\bar{\mathbf{x}}^{t+1}) &\le F(\bar{\mathbf{x}}^t) - \eta \langle \nabla F(\bar{\mathbf{x}}^t), \bar{\mathbf{m}}^t \rangle + \frac{L\eta^2}{2} \|\bar{\mathbf{m}}^t\|^2.
\end{align}
Applying the fundamental algebraic identity $-\langle a, b \rangle = \frac{1}{2}\|a-b\|^2 - \frac{1}{2}\|a\|^2 - \frac{1}{2}\|b\|^2$ with $a=\nabla F(\bar{\mathbf{x}}^t)$ and $b=\bar{\mathbf{m}}^t$
\begin{align}\label{inner}
    -\eta \langle \nabla F(\bar{\mathbf{x}}^t), \bar{\mathbf{m}}^t \rangle &= \frac{\eta}{2} \|\bar{\mathbf{m}}^t - \nabla F(\bar{\mathbf{x}}^t)\|^2 \nonumber \\
    &\quad - \frac{\eta}{2} \|\nabla F(\bar{\mathbf{x}}^t)\|^2 - \frac{\eta}{2} \|\bar{\mathbf{m}}^t\|^2.
\end{align}
Substituting \eqref{inner} back into \eqref{descent}   yields the intermediate descent inequality
\begin{align} \label{eq:descent_intermediate}
    F(\bar{\mathbf{x}}^{t+1}) &\le F(\bar{\mathbf{x}}^t) - \frac{\eta}{2} \|\nabla F(\bar{\mathbf{x}}^t)\|^2- \frac{\eta}{2}(1 - L\eta) \|\bar{\mathbf{m}}^t\|^2 
    \nonumber \\
    &+\frac{\eta}{2}
    \|\bar{\mathbf{m}}^t - \nabla F(\bar{\mathbf{x}}^t)\|^2.
\end{align}
To tightly bound the direction error term $\|\bar{\mathbf{m}}^t - \nabla F(\bar{\mathbf{x}}^t)\|^2$, we insert the average of the true local gradients $\frac{1}{n}\nabla\mathbf{F}(\mathbf{X}^t)\mathbf{1}$ and apply the inequality $\|a+b\|^2 \le 2\|a\|^2 + 2\|b\|^2$. Then
\begin{align}
    &\|\bar{\mathbf{m}}^t - \nabla F(\bar{\mathbf{x}}^t)\|^2 \nonumber \\
    &= \left\| \frac{1}{n}\mathbf{M}^t\mathbf{1} - \frac{1}{n}\nabla \mathbf{F}(\mathbf{X}^t)\mathbf{1} + \frac{1}{n}\nabla \mathbf{F}(\mathbf{X}^t)\mathbf{1} - \nabla F(\bar{\mathbf{x}}^t) \right\|^2 \nonumber \\
    &\le 2 \left\| \frac{1}{n}\mathbf{M}^t\mathbf{1} - \frac{1}{n}\nabla \mathbf{F}(\mathbf{X}^t)\mathbf{1} \right\|^2 \nonumber \\
    &\quad + 2 \left\| \frac{1}{n}\sum_{i=1}^n \nabla f_i(\mathbf{x}_i^t) - \frac{1}{n}\sum_{i=1}^n \nabla f_i(\bar{\mathbf{x}}^t) \right\|^2.
\end{align}
For the first term, by the definition of the average momentum error $\hat{G}^t$, its expectation is precisely $\frac{2}{n^2} \hat{G}^t$.
For the second term, applying Jensen's inequality and the $L$-smoothness of $f_i$ yields
\begin{align}
    &\left\| \frac{1}{n}\sum_{i=1}^n (\nabla f_i(\mathbf{x}_i^t) - \nabla f_i(\bar{\mathbf{x}}^t)) \right\|^2 \nonumber \\
    \le& \frac{1}{n} \sum_{i=1}^n \|\nabla f_i(\mathbf{x}_i^t) - \nabla f_i(\bar{\mathbf{x}}^t)\|^2 \nonumber \\
    \le& \frac{L^2}{n} \sum_{i=1}^n \|\mathbf{x}_i^t - \bar{\mathbf{x}}^t\|^2 
    = \frac{L^2}{n} \|\mathbf{X}^t - \bar{\mathbf{X}}^t\|_F^2.
\end{align}
Taking the expectation on both sides gives
\begin{equation}\label{bar_m}
    \mathbb{E}[\|\bar{\mathbf{m}}^t - \nabla F(\bar{\mathbf{x}}^t)\|^2] \le \frac{2}{n^2} \hat{G}^t + \frac{2L^2}{n} \Xi_x^t.
\end{equation}
Substituting \eqref{bar_m} back into the expectation of \eqref{eq:descent_intermediate} completes the proof.
\hfill $\square$

\subsection{Proof of Theorem \ref{thm:main}} \label{proof:main_thm}

We construct the Lyapunov function $\Phi^t$ as a linear combination of the objective function and the auxiliary error metrics
\begin{align} \label{eq:lyapunov_def}
    \Phi^t &= \mathbb{E}[F(\bar{\mathbf{x}}^t) - F^*] + c_1 \Xi_x^t + c_2 \Xi_v^t \nonumber \\
    &\quad + c_3 \hat{G}^t + c_4 \mathcal{G}^t.
\end{align}

To rigorously bound the local gradient norm $\mathbb{E}[\|\nabla\mathbf{F}(\mathbf{X}^t)\|_F^2]$, we introduce the gradient evaluated at the average consensus variable $\bar{\mathbf{x}}^t$ and the global full gradient. We decompose the local gradient as follows
\begin{align}
    &\mathbb{E}[\|\nabla\mathbf{F}(\mathbf{X}^t)\|_F^2] = \sum_{i=1}^n \mathbb{E}[\|\nabla f_i(\mathbf{x}_i^t)\|^2] \nonumber \\
    &= \sum_{i=1}^n \mathbb{E}\bigg[ \Big\| \left(\nabla f_i(\mathbf{x}_i^t) - \nabla f_i(\bar{\mathbf{x}}^t)\right) \nonumber \\
    &\quad + \left(\nabla f_i(\bar{\mathbf{x}}^t) - \nabla F(\bar{\mathbf{x}}^t) + \nabla F(\bar{\mathbf{x}}^t)\right) \Big\|^2 \bigg]. \label{eq:grad_decomp}
\end{align}
Applying the basic inequality $\|a + b\|^2 \leq 2\|a\|^2 + 2\|b\|^2$, we have
\begin{align}
    \mathbb{E}[\|\nabla\mathbf{F}(\mathbf{X}^t)\|_F^2] \leq 2\sum_{i=1}^n \mathbb{E}[\|\nabla f_i(\mathbf{x}_i^t) - \nabla f_i(\bar{\mathbf{x}}^t)\|^2] \nonumber \\ + 2\sum_{i=1}^n \mathbb{E}\left[\|\nabla f_i(\bar{\mathbf{x}}^t) - \nabla F(\bar{\mathbf{x}}^t) + \nabla F(\bar{\mathbf{x}}^t)\|^2\right]. \label{eq:grad_split}
\end{align}
For the first term in \eqref{eq:grad_split}, applying the $L$-smoothness assumption yields
\begin{align}
    2\sum_{i=1}^n \mathbb{E}[\|\nabla f_i(\mathbf{x}_i^t) - \nabla f_i(\bar{\mathbf{x}}^t)\|^2] &\leq 2L^2 \sum_{i=1}^n \mathbb{E}[\|\mathbf{x}_i^t - \bar{\mathbf{x}}^t\|^2] \nonumber \\
    &= 2L^2 \Xi_x^t. \label{eq:grad_smooth}
\end{align}
For the second term in \eqref{eq:grad_split}, we expand the squared Euclidean norm
\begin{align}
    & \sum_{i=1}^n \|\nabla f_i(\bar{\mathbf{x}}^t) - \nabla F(\bar{\mathbf{x}}^t) + \nabla F(\bar{\mathbf{x}}^t)\|^2 \nonumber \\
    &= \sum_{i=1}^n \|\nabla f_i(\bar{\mathbf{x}}^t) - \nabla F(\bar{\mathbf{x}}^t)\|^2 + n\|\nabla F(\bar{\mathbf{x}}^t)\|^2 \nonumber \\
    &\quad + 2\left\langle \sum_{i=1}^n (\nabla f_i(\bar{\mathbf{x}}^t) - \nabla F(\bar{\mathbf{x}}^t)), \nabla F(\bar{\mathbf{x}}^t) \right\rangle. \label{eq:grad_expand}
\end{align}
Crucially, by the definition of the global objective gradient, $\nabla F(\bar{\mathbf{x}}^t) = \frac{1}{n}\sum_{i=1}^n \nabla f_i(\bar{\mathbf{x}}^t)$, the cross-term in \eqref{eq:grad_expand} becomes strictly zero. Applying the data heterogeneity assumption $\frac{1}{n}\sum_{i=1}^n \|\nabla f_i(\mathbf{x}) - \nabla F(\mathbf{x})\|^2 \leq \zeta^2$, the second term in \eqref{eq:grad_split} is tightly bounded by
\begin{align}
    &2\sum_{i=1}^n \mathbb{E}\left[\|\nabla f_i(\bar{\mathbf{x}}^t) - \nabla F(\bar{\mathbf{x}}^t) + \nabla F(\bar{\mathbf{x}}^t)\|^2\right] \nonumber \\
    &\leq 2n\zeta^2 + 2n\mathbb{E}[\|\nabla F(\bar{\mathbf{x}}^t)\|^2]. \label{eq:grad_hetero}
\end{align}
Substituting \eqref{eq:grad_smooth} and \eqref{eq:grad_hetero} back into \eqref{eq:grad_split} perfectly yields the desired bound
\begin{align}\label{eq:grad_norm_bound}
    \mathbb{E}[\|\nabla\mathbf{F}(\mathbf{X}^t)\|_F^2] \leq 2n\mathbb{E}[\|\nabla F(\bar{\mathbf{x}}^t)\|^2] + 2L^2 \Xi_x^t + 2n\zeta^2.
\end{align}
To rigorously bound the one-step descent of the Lyapunov function, we must systematically absorb the intermediate variables $\mathbb{E}[\|\mathbf{M}^{t+1} - \mathbf{M}^t\|_F^2]$, $\mathbb{E}[\|\mathbf{X}^{t+1} - \mathbf{X}^t\|_F^2]$, and $\mathbb{E}[\|\nabla \mathbf{F}(\mathbf{X}^t)\|_F^2]$.

First, to streamline the analysis and explicitly track the influence of the momentum update, the parameter difference, and the biased gradient oracle, we define the auxiliary aggregate coefficients $C_{\Delta X}$, $C_{\nabla F}$, $C_{\sigma^2}$, and $C_{\sigma_f^2}$
\begin{align}
    C_{\Delta X} &= \frac{c_2}{\rho}\left[3\lambda^2 L^2(1+2M_f)\right] \nonumber \\
    &\quad + c_3\left(\frac{2n}{\lambda} + 4\lambda n M_f\right)L^2 \nonumber \\
    &\quad + c_4\left(\frac{2}{\lambda} + 4\lambda M_f\right)L^2, \label{eq:C_DeltaX} \\
    C_{\nabla F} &= \frac{c_2}{\rho}\left[6\lambda^2 M_f\right] + c_3(4\lambda n M_f) + c_4(4\lambda M_f), \label{eq:C_NablaF} \\
    C_{\sigma^2} &= \frac{c_2}{\rho}\left[3\lambda^2 n\right] + c_3(\lambda^2 n) + c_4(\lambda^2 n), \label{eq:C_sigma2} \\
    C_{\sigma_f^2} &= \frac{c_2}{\rho}\left[3\lambda^2 n\right] + c_3(2\lambda n^2) + c_4(2\lambda n). \label{eq:C_sigmaf2}
\end{align}
Notice that the zero-mean variance $\sigma^2$ and the systematic bias $\sigma_f^2$ are strictly decoupled. Because the pure stochastic noise $\sigma^2$ is perfectly isolated before inequality relaxations (as derived in Lemmas \ref{lemma:avg_mom_error} and \ref{lemma:mom_est_error}), its corresponding multiplier $C_{\sigma^2}$ strictly scales with $\lambda^2$, which enables the variance reduction effect.

Assuming the step size satisfies $\eta \leq \frac{1}{2L}$, we have $1 - L\eta \geq \frac{1}{2}$, which bounds the descent penalty term by $-\frac{\eta}{4}\mathbb{E}[\|\bar{\mathbf{m}}^t\|^2]$. By substituting the descent inequality from Lemma \ref{lemma:descent}, the contraction bounds from Lemmas \ref{lemma:tracking_error} and \ref{lemma:consensus_error}, and the momentum estimation bounds from Lemmas \ref{lemma:avg_mom_error} and \ref{lemma:mom_est_error} into the Lyapunov difference, we can consolidate the terms as follows
\begin{align}
    \Phi^{t+1} - \Phi^t \leq & -\frac{\eta}{2}\mathbb{E}[\|\nabla F(\bar{\mathbf{x}}^t)\|^2] - \frac{\eta}{4}\mathbb{E}[\|\bar{\mathbf{m}}^t\|^2] \nonumber \\
    & - \left(c_1 \rho - \frac{\eta L^2}{n}\right)\Xi_x^t - \left(c_2 \rho - c_1 \frac{\eta^2}{\rho}\right)\Xi_v^t \nonumber \\
    & - \left(c_3 \lambda - \frac{\eta}{n^2}\right)\hat{G}^t - \left(c_4 \lambda - \frac{3c_2 \lambda^2}{\rho}\right)\mathcal{G}^t \nonumber \\
    & + C_{\Delta X} \mathbb{E}[\|\mathbf{X}^{t+1} - \mathbf{X}^t\|_F^2] \nonumber \\
    & + C_{\nabla F} \mathbb{E}[\|\nabla \mathbf{F}(\mathbf{X}^t)\|_F^2] \nonumber \\
    & + C_{\sigma^2}\sigma^2 + C_{\sigma_f^2}\sigma_f^2. \label{eq:lyapunov_intermediate}
\end{align}

Next, we decouple the intermediate errors using Lemma 6 and \eqref{eq:grad_norm_bound}
\begin{align}
    C_{\Delta X} \mathbb{E}[\|\mathbf{X}^{t+1} - \mathbf{X}^t\|_F^2] &\leq C_{\Delta X} \Big[ 12\Xi_x^t \nonumber \\
    &\quad + 3\eta^2\Xi_v^t + 3n\eta^2\mathbb{E}[\|\bar{\mathbf{m}}^t\|^2] \Big], \label{eq:sub_DeltaX} \\
    C_{\nabla F} \mathbb{E}[\|\nabla \mathbf{F}(\mathbf{X}^t)\|_F^2] &\leq C_{\nabla F} \Big[ 2n\mathbb{E}[\|\nabla F(\bar{\mathbf{x}}^t)\|^2] \nonumber \\
    &\quad + 2L^2 \Xi_x^t + 2n\zeta^2 \Big]. \label{eq:sub_NablaF}
\end{align}

By substituting \eqref{eq:sub_DeltaX} and \eqref{eq:sub_NablaF} back into \eqref{eq:lyapunov_intermediate}, we obtain the final one-step descent inequality
\begin{align}
    \Phi^{t+1} - \Phi^t \leq & - A_1 \mathbb{E}[\|\nabla F(\bar{\mathbf{x}}^t)\|^2] - A_m \mathbb{E}[\|\bar{\mathbf{m}}^t\|^2] \nonumber \\
    & - A_2 \Xi_x^t - A_3 \Xi_v^t - A_4 \hat{G}^t - A_5 \mathcal{G}^t \nonumber \\
    & + C_{\nabla F} (2n\zeta^2) + C_{\sigma^2}\sigma^2 + C_{\sigma_f^2}\sigma_f^2, \label{eq:lyapunov_final}
\end{align}
where the aggregate coefficients are rigorously extracted as
\begin{align}
    A_1 &= \frac{\eta}{2} - 2n C_{\nabla F}, \nonumber \\
    A_m &= \frac{\eta}{4} - 3n\eta^2 C_{\Delta X}, \nonumber \\
    A_2 &= c_1 \rho - \frac{\eta L^2}{n} - 12 C_{\Delta X} - 2L^2 C_{\nabla F}, \nonumber \\
    A_3 &= c_2 \rho - c_1 \frac{\eta^2}{\rho} - 3\eta^2 C_{\Delta X}, \nonumber \\
    A_4 &= c_3 \lambda - \frac{\eta}{n^2}, \nonumber \\
    A_5 &= c_4 \lambda - \frac{3c_2 \lambda^2}{\rho}. \nonumber
\end{align}

To ensure the convergence of the algorithm, we systematically configure the Lyapunov parameters $\{c_i\}_{i=1}^4$ to guarantee $A_2, \dots, A_5 \geq 0$. We exactly set
\begin{align}
    c_2 = \frac{\eta}{\rho}, \quad c_3 = \frac{2\eta}{\lambda n^2}, \quad c_4 = \frac{3\eta \lambda}{\rho^2}. \label{eq:parameter_setting}
\end{align}
Then, $c_1$ is chosen sufficiently large to satisfy $A_2 = 0$. With these exact choices, it is mathematically straightforward to verify that $A_4 = \frac{\eta}{n^2} > 0$ and $A_5 = 0$. 

Crucially, we must ensure strict global descent by enforcing $A_1 > 0$ and safely drop the average momentum error by enforcing $A_m \geq 0$. 
Substituting the parameter settings into $C_{\nabla F}$, we obtain
\begin{align}
    2nC_{\nabla F} = 16\eta M_f + \frac{36\eta\lambda^2 n}{\rho^2}M_f. \label{eq:C_Nabla_Value}
\end{align}
Following standard conventions in biased gradient analysis, we reasonably assume the relative bias ratio is bounded, e.g., $M_f \leq \frac{1}{256}$. Combined with the topology-aware condition $\lambda \leq \frac{\rho}{4\sqrt{n}}$, we tightly bound $2nC_{\nabla F} \leq \frac{\eta}{16} + \frac{\eta}{16} = \frac{\eta}{8}$, which guarantees a strict descent coefficient $A_1 \geq \frac{3\eta}{8} > \frac{\eta}{4}$.
Furthermore, we restrict the step size $\eta$ such that $3n\eta^2 C_{\Delta X} \leq \frac{\eta}{4}$ (as specified in Theorem 1), which directly yields $A_m \geq 0$.

By securely discarding the non-positive error terms (since $A_m, A_2, \dots, A_5 \geq 0$), the Lyapunov difference is simplified as
\begin{align}
    \Phi^{t+1} - \Phi^t &\leq -\frac{\eta}{4}\mathbb{E}[\|\nabla F(\bar{\mathbf{x}}^t)\|^2] + C_{\nabla F}(2n\zeta^2) \nonumber \\
    &\quad + C_{\sigma^2}\sigma^2 + C_{\sigma_f^2}\sigma_f^2. \label{eq:simplified_lyapunov_pf1}
\end{align}

Finally, we explicitly compute the exact asymptotic error multipliers. By substituting \eqref{eq:parameter_setting} into \eqref{eq:C_sigma2} and \eqref{eq:C_sigmaf2}, we beautifully obtain the tight bounds for the noise and bias components
\begin{align}
    C_{\sigma^2} &= \eta \left( \frac{3\lambda^2 n}{\rho^2} + \frac{2\lambda}{n} + \frac{3\lambda^3 n}{\rho^2} \right), \label{eq:final_sigma_val} \\
    C_{\sigma_f^2} &= \eta \left( 4 + \frac{9\lambda^2 n}{\rho^2} \right), \label{eq:final_sigmaf_val} \\
    C_{\nabla F}(2n\zeta^2) &= \eta M_f \left( 16 + \frac{36\lambda^2 n}{\rho^2} \right) \zeta^2. \label{eq:final_hetero_val}
\end{align}

Summing the inequality \eqref{eq:simplified_lyapunov_pf1} over $t=0, \dots, T-1$, applying the telescoping property $\Phi^T - \Phi^0 \geq -\Phi^0$, and dividing both sides by $\frac{\eta T}{4}$, we obtain the rigorous convergence bound
\begin{align}
    &\frac{1}{T}\sum_{t=0}^{T-1}\mathbb{E}[\|\nabla F(\bar{\mathbf{x}}^t)\|^2] \leq \frac{4\Phi^0}{\eta T} + 64M_f\left(1 + \frac{9\lambda^2 n}{4\rho^2}\right)\zeta^2 \nonumber \\
    &\quad + \frac{8\lambda}{n} \left( 1 + \frac{3\lambda n^2}{2\rho^2} + \frac{3\lambda^2 n^2}{2\rho^2} \right) \sigma^2 + 16\left(1 + \frac{9\lambda^2 n}{4\rho^2}\right)\sigma_f^2. \label{eq:final_bound}
\end{align}
This completes the proof of Theorem 1.

\textit{Remark on Data Heterogeneity.} Equation \eqref{eq:final_bound} reveals a fundamental superiority of the proposed Biased-DMT algorithm. Notice that the data heterogeneity term $\zeta^2$ is strictly multiplied by the relative bias ratio $M_f$. This implies that if the gradient oracle has only absolute bias ($M_f=0, \sigma_f^2 > 0$), the $\zeta^2$ term perfectly vanishes to zero. The momentum tracking mechanism effectively and completely eradicates the structural heterogeneity error caused by decentralized networks. The residual $M_f\zeta^2$ is merely an irreducible physical consequence of the locally injected relative noise, which perfectly aligns with theoretical limits in biased gradient optimization.

\subsection{Proof of Corollary \ref{cor:rate}} \label{proof:cor}

To achieve the optimal linear speedup, we must properly balance the transient descent term and the pure stochastic noise term, while rigorously satisfying the parameter conditions established in Theorem \ref{thm:main}.

Unlike standard approaches that set the momentum parameter as a static topology-dependent constant, we adopt a dynamic parameter tuning strategy inspired by variance reduction techniques. To effectively control the pure stochastic noise $\sigma^2$, we couple the momentum parameter $\lambda$ with the total number of iterations $T$ and the network size $n$. Specifically, we select
\begin{equation} \label{eq:dynamic_lambda}
    \eta = \frac{1}{16L}\sqrt{\frac{n}{T}}, \quad \text{and} \quad \lambda = \sqrt{\frac{n}{T}}.
\end{equation}

\textbf{1. Verification of Topology Conditions.} 
First, we must mathematically verify that this dynamic configuration satisfies the requirements of Theorem 1. For the topology-aware condition $\lambda \le \frac{\rho}{4\sqrt{n}}$, substituting our choice of $\lambda$ yields
\begin{equation}
    \sqrt{\frac{n}{T}} \le \frac{\rho}{4\sqrt{n}} \implies T \ge \frac{16n^2}{\rho^2}.
\end{equation}
Thus, provided that the total number of iterations $T$ is sufficiently large (i.e., entering the asymptotic regime), the topological condition holds trivially. We also assume $T \ge n$ such that the step size satisfies $\eta \le 1/L$.

\textbf{2. Explicit Bound on the Initial Lyapunov Function $\Phi^0$.} 
To rigorously ensure that the transient term $\frac{4\Phi^0}{\eta T}$ decays to $\mathcal{O}(1/\sqrt{nT})$, we must explicitly verify that the initial Lyapunov value $\Phi^0$ is bounded by a constant independent of $T$. Recall the definition
\begin{equation}
    \Phi^0 = \Delta_F^0 + c_1 \Xi_x^0 + c_2 \Xi_v^0 + c_3 \hat{G}^0 + c_4 \mathcal{G}^0,
\end{equation}
where $\Delta_F^0 = F(\bar{\mathbf{x}}^0) - F^*$. Following standard initialization protocols, we set $\mathbf{x}_i^0 = \bar{\mathbf{x}}^0$ for all $i \in \{1, \dots, n\}$, which trivially yields the initial consensus error $\Xi_x^0 = 0$. Therefore, the term $c_1 \Xi_x^0$ strictly vanishes regardless of $c_1$. 

We initialize the trackers and momentums with the first batch of stochastic gradients $\mathbf{V}^0 = \mathbf{M}^0 = \tilde{\mathbf{G}}(\mathbf{X}^0)$. Let $G_0^2 = \frac{1}{n}\|\nabla \mathbf{F}(\mathbf{X}^0)\|_F^2$ be the bounded initial gradient norm. Using Assumption 3 and Jensen's inequality, the initial auxiliary errors $\Xi_v^0$, $\hat{G}^0$, and $\mathcal{G}^0$ are tightly bounded by $\mathcal{O}(n)(\sigma^2 + \sigma_f^2 + G_0^2)$, which are deterministic constants independent of $T$.

Crucially, we evaluate the Lyapunov weights $c_2, c_3, c_4$ by substituting our parameter choices \eqref{eq:dynamic_lambda}
\begin{align}
    c_2 &= \frac{\eta}{\rho} = \frac{1}{16\rho L}\sqrt{\frac{n}{T}}, \\
    c_3 &= \frac{2\eta}{\lambda n^2} = \frac{2 \left(\frac{1}{16L}\sqrt{\frac{n}{T}}\right)}{\sqrt{\frac{n}{T}} n^2} = \frac{1}{8L n^2}, \\
    c_4 &= \frac{3\eta \lambda}{\rho^2} = \frac{3 \left(\frac{1}{16L}\sqrt{\frac{n}{T}}\right)\left(\sqrt{\frac{n}{T}}\right)}{\rho^2} = \frac{3n}{16\rho^2 LT}.
\end{align}
This explicitly demonstrates that $c_3$ magically simplifies to a strict constant $\frac{2}{Ln^2}$ independent of $T$, while $c_2$ and $c_4$ strictly decay with $T$. Consequently, the entire initial value $\Phi^0$ is upper-bounded by a deterministic constant $\tilde{\Phi}^0 = \mathcal{O}(1)$. 

Substituting $\Phi^0 \le \tilde{\Phi}^0$ and $\eta = \frac{1}{16L}\sqrt{\frac{n}{T}}$ into the transient term, the variables align perfectly to yield the optimal rate
\begin{equation}
    \frac{4\Phi^0}{\eta T} \le \frac{4\tilde{\Phi}^0}{T \left(\frac{1}{16L}\sqrt{\frac{n}{T}}\right)} = \frac{64\tilde{\Phi}^0 L}{\sqrt{nT}} = \mathcal{O}\left( \frac{1}{\sqrt{nT}} \right).
\end{equation}

\textbf{3. Derivation of the Final Asymptotic Rate.} 
Next, we evaluate the asymptotic behavior of the error multipliers in Theorem 1. Substituting $\lambda = \sqrt{n/T}$ into the pure noise multiplier, we obtain
\begin{align} \label{eq:noise_decay}
    &\frac{8\lambda}{n} \left( 1 + \frac{3\lambda n^2}{2\rho^2} + \frac{3\lambda^2 n^2}{2\rho^2} \right) \nonumber \\
    &= \frac{8}{\sqrt{nT}} + \frac{12n^2}{\rho^2 T} + \frac{12n^{5/2}}{\rho^2 T^{3/2}} = \mathcal{O}\left( \frac{1}{\sqrt{nT}} \right),
\end{align}
where the higher-order terms $\mathcal{O}(1/T)$ and $\mathcal{O}(1/T^{3/2})$ decay strictly faster and are perfectly dominated by the $\mathcal{O}(1/\sqrt{nT})$ term. 

Crucially, because the systematic bias multiplier $16(1 + \frac{9\lambda^2 n}{4\rho^2})$ approaches the constant $16$ as $T \to \infty$, the residual bias term remains $\mathcal{O}(\sigma_f^2)$. Similarly, the heterogeneity multiplier is bounded by $\mathcal{O}(M_f)$.

Substituting these bounds back into \eqref{eq:final_bound}, we elegantly balance the transient term and the pure noise term, yielding the definitive asymptotic bound under general relative bias
\begin{align} \label{eq:final_corollary_bound}
    \frac{1}{T}\sum_{t=0}^{T-1}\mathbb{E}[\|\nabla F(\bar{\mathbf{x}}^t)\|^2] &\le \frac{64\tilde{\Phi}^0 L}{\sqrt{nT}} + \mathcal{O}\left( \frac{1}{\sqrt{nT}} \right)\sigma^2 \nonumber \\
    &\quad + \mathcal{O}(M_f \zeta^2) + \mathcal{O}(\sigma_f^2) \nonumber \\
    &= \mathcal{O}\left(\frac{1}{\sqrt{nT}}\right) + \mathcal{O}\left( M_f\zeta^2 + \sigma_f^2 \right).
\end{align}
Notice that the zero-mean pure noise $\sigma^2$ is entirely absorbed into the $\mathcal{O}(1/\sqrt{nT})$ term, successfully yielding the optimal linear speedup with respect to the network size $n$. This completely eradicates the $\mathcal{O}(1/n)$ steady-state error floor commonly suffered by traditional algorithms. 

Finally, we consider the setting where the gradient oracle exhibits only absolute bias, meaning $M_f = 0$. Substituting $M_f = 0$ into the explicit bound above, the data heterogeneity term $\mathcal{O}(M_f \zeta^2)$ perfectly vanishes. The bound drastically simplifies to
\begin{equation}
    \frac{1}{T} \sum_{t=0}^{T-1} \mathbb{E}[\|\nabla F(\bar{\mathbf{x}}^t)\|^2] \le \mathcal{O}\left( \frac{1}{\sqrt{nT}} \right) + \mathcal{O}\left( \sigma_f^2 \right).
\end{equation}
This mathematically demonstrates that when $M_f=0$, the convergence of Biased-DMT is completely decoupled from the data heterogeneity variance $\zeta^2$. The algorithm achieves exact linear speedup and recovers the inherent physical error floor $\mathcal{O}(\sigma_f^2)$ without ever requiring the commonly used bounded data heterogeneity assumption. This completes the proof.
\hfill $\square$


\end{document}